\documentclass[twoside,reqno]{amsart}
\usepackage[english]{babel}
\usepackage[latin1]{inputenc}
\usepackage[T1]{fontenc}
\usepackage{xcolor}
\definecolor{codegreen}{rgb}{0,0.6,0}
\usepackage{amssymb,amsmath,amstext,amsfonts,amsthm,mathrsfs}
\usepackage{mathtools}
\usepackage{relsize}
\usepackage[colorlinks=true, urlcolor=blue, linkcolor=blue, citecolor=blue]{hyperref}
\usepackage{todonotes}
\usepackage[normalem]{ulem}
\usepackage{graphicx}
\usepackage{float}
\usepackage{enumitem}
\usepackage{multirow}
\usepackage{booktabs}
\usepackage{diagbox}
\usepackage[table,xcdraw]{xcolor}
\usepackage{listings}
\lstdefinelanguage{Macaulay2}
{
xleftmargin=.2in, 
xrightmargin=.2in, 
basicstyle={\ttfamily}, 
keywordstyle={\color{blue}}, 
commentstyle={\color{codegreen}}, 
stringstyle={\color{red!40!black}}, 
rulecolor=\color{yellow}, 
basewidth={1.2ex}, 
sensitive=false, 
morecomment=[l]{--}, 
morecomment=[s]{-*}{*-}, 
morestring=[b]", 
escapechar={`}, 
escapebegin={\rmfamily}, 
morekeywords={load, random, degree, genus, topComponents, ideal, Ext, minors, quotient, intersect, map, kernel, preimage, codim, sheaf, matrix, hilbertPolynomial, Projective, false, sheafExt, ann, cooker, flatten, gens, entries, basis, apply}
}
\lstalias{Macaulay2output}{Macaulay2}

\usepackage{tikz-cd}
\usetikzlibrary{arrows}
\tikzcdset{
every label/.append style = {font = \small},
arrow style=tikz,
}

\usepackage[all]{xy}

\theoremstyle{plain}

\newtheorem{theorem}{Theorem}[section]

\newtheorem{corollary}[theorem]{Corollary}
\newtheorem{proposition}[theorem]{Proposition}
\newtheorem{remark}[theorem]{Remark}
\newtheorem{example}[theorem]{Example}
\newtheorem{conjecture}[theorem]{Conjecture}

\newcommand{\ba}{{\bf a}}
\newcommand{\bb}{{\bf b}}
\newcommand{\lra}{\longrightarrow}
\newcommand{\ZZ}{\mathbb{Z}}
\newcommand{\op}[1]{{\mathcal O}_{\mathbb{P}^{#1}}}
\newcommand{\p}[1]{\mathbb{P}^{#1}}

\DeclareMathOperator{\im}{im}

\DeclareMathOperator{\End}{End}
\DeclareMathOperator{\gl}{GL}
\DeclareMathOperator{\Ext}{Ext}
\newcommand{\E}{\mathcal{E}}

\newcommand{\cA}{\mathcal{A}}
\newcommand{\cB}{\mathcal{B}}
\newcommand{\cC}{\mathcal{C}}
\newcommand{\cH}{\mathcal{H}}
\newcommand{\cM}{\mathcal{M}}

\DeclareMathOperator{\id}{id}
\newcommand{\OO}{\mathcal{O}}

\title[New irreducible components of $\mathcal{B}(0,c_2)$]{New irreducible components of $\mathcal{B}(0,c_2)$ 
and Computation of the Dimension of its tangent space}

\author[A. Fontes]{Aislan Fontes}
\address{Departamento de Matem\'atica \\ Universidade Federal de Sergipe - Campus Itabaiana\\ Av. Vereador Ol\'impio Grande s/n \\
Itabaiana - SE 49506-036, Brasil}
\email{ailfontes30@gmail.com}

\author[M. Santos]{Maxwell Santos}
\address{Departamento de Matem\'atica \\ Universidade Federal de Sergipe - Campus Itabaiana\\ Av. Vereador Ol\'impio Grande s/n \\
Itabaiana - SE 49506-036, Brasil}
\email{06maxwell06@gmail.com}

\makeatletter
\@namedef{subjclassname@2020}{\textup{2020} Mathematics Subject Classification}
\makeatother
\subjclass[2020]{14D20, 14J60, 14F06}

\keywords{tangent space, Moduli spaces, Rank-2 bundles, Monads, semi-continuity}

\date{February 2026}

\begin{document}

\begin{abstract}
We provide a Macaulay2 code for computing the dimension of the tangent space to $\mathcal{B}(e,c_2)$ in certain cases. Using this 
code, we identify components of $\mathcal{B}(e,c_2)$ containing singular points and compute the dimension 
of the irreducible component $M_4$ of $\mathcal{B}(-1,6)$, whose existence was proved in 
\cite{MF2021}. Furthermore, we prove the existence of infinite families of 
irreducible components of $\mathcal{B}(0,c_2)$.
\end{abstract}

\maketitle
\tableofcontents

\section{Introduction}

 Let $\mathcal{B}(e,c_2)$ denote the moduli space of stable rank 2 bundle on $\mathbb{P}^3$ with Chern classes $c_1=e$ and $c_2$. Up to normalizations, it is sufficient to consider $e=0, -1$ and $c_2\geq0$. In \cite{H78}, Hartshorne showed that  $\mathcal{B}(0,c_2)$ is a quasiprojective scheme and nonempty for $c_2\geq1$ while $\mathcal{B}(-1,c_2)$ is a quasiprojective scheme and nonempty for all even integer $c_2\geq2$, where each irreducible component of $\mathcal{B}(e,c_2)$ has dimension at least $8c_2-3+2e$. It is worth recalling that, for $c_2\geq2$, three types of irreducible components of the moduli space $\mathcal{B}(e,c_2)$ are known: 
  \begin{itemize}
      \item An infinite series $I_{c_2}$ of irreducible component of expected dimension $8c_2-3+2e$ c.f. \cite{H78}. If $e=0$ then  $I_{c_2}$ is called \textit{Instanton component} which is the closure 
of the smooth open subset of $I_{c_2}$ constituted by the Instanton vector bundles;
\item Irreducible components of the type $\mathcal{G}(t,r,s)$, so-called \textit{Ein components}, depending on the triples of 
integers $t, r, s$ constructed by Ein \cite{Ein88}.
\item A new infinite series $\sum_e$ of irreducible components $\mathcal{M}_{c_2}$ of 
$\mathcal{B}(e,c_2)$, recently provided in \cite{tikhomirov2019}, which we call the \textit{modified Instanton component}.
  \end{itemize} 
  
  The existence of all irreducible components $\mathcal{B}(e,c_2)$ is known only for small values of $c_2$, specifically 
  $0\leq c_2\leq5$. More precisely, Hartshorne and Rao \cite[Section 5.3]{HR91} proved that the moduli spaces $\mathcal{B}(0,1)$ and 
  $\mathcal{B}(0,2)$ are irreducible. For $\mathcal{B}(0,3)$ and $\mathcal{B}(0,4)$, it is shown in \cite{chang83,Eli81} that each has two irreducible components. The space $\mathcal{B}(0,5)$ is characterized in \cite{almeida21}. Finally, classifications of $\mathcal{B}(-1,2)$ and $\mathcal{B}(-1,4)$ can be found in \cite{HS81,M81} and \cite{BM85}, respectively. Morevorer, from \cite[Theorem 18]{MF2021} it follows that $\mathcal{B}(-1.6)$ has four known irreducible components: the Hartshorne component $M_1$ of dimension 43, two Ein components $M_2$ and $M_3$ of dimension 43 and 50, respectively, and a fouth component $M_4$ of dimension at least 45.  \\
  \indent We organize this paper as follows. In Section \ref{section2} we recall the concept of minimal monads and its relation with stable rank 2 vector bundles on $\mathbb{P}^3$. We also list all the known irreducible components of $\mathcal{B}(0,c_2)$  with their respective dimension. Finally, we recall the Hartshorne-Serre correspondence, which constructs rank-2 vector bundles on $\mathbb{P}^3$.\\
 \indent Section \ref{section-ext} is devoted to computing the dimension of $\Ext^1(\mathcal{E}, \mathcal{E})$ for a stable rank 2 bundle $\mathcal{E}$ on $\mathbb{P}^3$ under certain specified conditions. To aid in this computation, we provide a Macaulay2 code. Using this method, we collect in Table \ref{ext-tab} the dimension of the tangent space $\mathcal{B}(0,6)$ at a stable bundle $\E$ given as cohomology of a monad corresponding to one of the cases 6 (3) or 6(4) in Table 5.3 of Hartshorne and Rao \cite{HR91}. We also prove in Theorem \ref{ex2} that the dimension of the irreducible component $M_4$ of $\mathcal{B}(-1,6)$ is 45.\\
 \indent Using the lower semi-continuity of cohomology dimensions for coherent sheaves and comparing the dimensions 
 of the three known irreducible components of $\mathcal{B}(0,c_2)$, we prove in Section \ref{section-families} 
 the existence of two new infinite families $\mathbf{V}_0$ and $\mathbf{V}_1$ of components of $\mathcal{B}(0,c_2)$. The family $\mathbf{V}_0$ contains stable rank-2 bundles arising as cohomology of minimal monads as in \eqref{eq:hoorcks-monad} with $\boldsymbol{a}=({(a-1)}^2,0^2), \boldsymbol{b}=(a^2), a\geq3$ while $\mathbf{V}_1$ contains those given as cohomology of minimal monads as in \eqref{eq:hoorcks-monad} with $\boldsymbol{a}=({(a-1)}^2,1)$ 
 and $\boldsymbol{b}=(a^2), a\geq3$. 
 
\section{Preliminaries}\label{section2}
This section will be devoted to recall basic concepts and prove some technical results used throughout the work. 

\subsection{Monads}\label{sub-monads}
A monad is a complex of vector bundles of the form 
\[
\cM : 0 \lra \cC \stackrel{\beta}{\lra} \cB \stackrel{\alpha}{\lra} \cA \lra 0
\]
whose only nontrivial cohomology is $\E = \cH^0(\cM) = \ker \alpha/\im \beta$. In particular, $\alpha$ is surjective and $\beta$ is injective. The sheaf $\E$ is the cohomology of the monad $\cM$. In \cite{Horrocks-constr}, Horrocks proved that any vector bundle $\E$ on $\p3$ can be obtained as the cohomology of a monad where $\cA$, $\cB$, and $\cC$ are direct sums of line bundles; see also \cite{Barth-Hulek-monads,Rao-coh}. Such monads are called Horrocks monads. In this case, $\alpha$ and $\beta$ are given by matrices of homogeneous polynomials. The monad is called minimal if the matrices defining $\alpha$ and $\beta$ have no non-zero constant entry. Any monad is homotopy equivalent to a minimal one.

For $\E$ a rank-two vector bundle with $c_1(\E)=0$, the isomorphism $\E \cong \E^\vee$ reflects on the monad: $\cC = \cA^\vee$ and $\cB = \cB^\vee$ and $\E$ is the cohomology of a minimal monad of the form
\begin{equation}\label{eq:hoorcks-monad}
    \cM : 0 \lra \bigoplus_{i=1}^s \op3(-a_i) \stackrel{\alpha}{\lra} \begin{matrix} \displaystyle
    \bigoplus_{i=1}^{s+1} \op3(b_i) \\ \displaystyle \oplus \\ \displaystyle \bigoplus_{i=1}^{s+1} \op3(-b_i)
\end{matrix} \stackrel{\beta}{\lra} \bigoplus_{i=1}^s \op3(a_i) \lra 0
\end{equation}
for some integers $a_1\leq \dots \leq a_s$ and $0 \leq b_1\leq \dots \leq b_{s+1}$. The tuples $\ba = (a_1, \dots , a_s)$ and $\bb = (b_1, \dots , b_{s+1})$ are called the type of the monad. Computing Chern classes yields the following equation:
\begin{equation}\label{eq:monad-c2}
c_2(\E)=\sum_{i=1}^sa_i^2-\sum_{j=1}^{s+1} b_j^2.
\end{equation}
\begin{example}\label{ein-monad}
\textup{A Horrocks monad such that the extreme terms are line bundles, $\ba = (t)$, is called an 
Ein monad. This terminology originates from the influential work of Ein \cite{Ein88}. For integers $t, r,s \geq 0$  
such that $r+s<t$, we consider monads of the form
\[
0 \lra \op3(-t) \stackrel{\alpha}{\lra} \begin{matrix}
  \displaystyle  \op3(s)\oplus \op3(r) \\
  \displaystyle  \oplus \\
  \displaystyle  \op3(-s) \oplus \op3(-r) \\
\end{matrix} 
\stackrel{\beta}{\lra} \op3(t) \lra 0
\]
where there exist polynomials $f_1\in H^0(\mathbb{P}^3,\mathcal{O}_{\mathbb{P}^3}(t-s)), f_2\in H^0(\mathbb{P}^3,\mathcal{O}_{\mathbb{P}^3}(t-r)), f_3\in H^0(\mathbb{P}^3,\mathcal{O}_{\mathbb{P}^3}(t+s))$ and $f_4\in H^0(\mathbb{P}^3,\mathcal{O}_{\mathbb{P}^3}(t+r))$ such that  
\[
\alpha = \begin{bmatrix}
    f_1 & f_2 & f_3 & f_4
\end{bmatrix} \quad \text{and} \quad \beta = \begin{bmatrix}
    -f_3 & -f_4 & f_1 & f_2
\end{bmatrix}^T.
\]
If $\E$ is the cohomology of a minimal monad as above, then $\E$ is called a \textit{generalized null correlation bundle}. It 
was shown in \cite{Ein88} that $\E$ is stable rank 2 bundle if and only if $t>r+s$ and $c_2(\E)=t^2-r^2-s^2$.
}
\end{example}
Given a minimal monad as in \eqref{eq:hoorcks-monad}, $\alpha$ determines a minimal presentation of the cohomology module $H^1_*(\E) = \bigoplus_{l\in \ZZ} H^1(\E(l))$ as a $S= k[x_0,\dots,x_3]$-module. Conversely, given a minimal presentation
\[
L_1 \stackrel{\alpha}{\lra} L_0 \lra H^1_*(\E) \lra 0
\]
we have that $\E$ is defined by the monad
\[
0 \lra \widetilde{L}_0^\vee(-1) \lra \widetilde{L}_1  \stackrel{\alpha}{\lra} \widetilde{L}_0 \lra 0. 
\]
For example, if $\E$ is a null correlation bundle as in Example \ref{ein-monad} then 
$H^1_*(\E)\simeq M\otimes S(t)$, where $M=S/(f_1,f_2,f_3,f_4)$ is a $0$-dimensional graded Gorenstein 
ring and $H^1_*(\E)$ is generated by 
a single nonzero element.
\subsection{The moduli space $\mathcal{B}(0,c_2)$ and its Known components}
We define $\mathcal{P}(\boldsymbol{a}; \boldsymbol{b})$ to be the set of minimal Horrocks monads as in \eqref{eq:hoorcks-monad} and we can see that
$$\mathcal{P}(\boldsymbol{a}; \boldsymbol{b})=\{\beta:\cA^\vee\rightarrow\cB \mbox{ locally left invertible }: \beta^\vee\Omega\beta=0\},$$ 
where $\Omega$ is a fixed symplectic form on $\cB$. Let $\mathcal{V}(\boldsymbol{a}; \boldsymbol{b})$ denote the set of isomorphism 
classes of stable rank 2 bundles on $\mathbb{P}^3$ with even determinant which are given as cohomology 
of a monad in $\mathcal{P}(\boldsymbol{a}; \boldsymbol{b})$. Consider 
$G=\{\varphi\in\End(\cB):\varphi^\vee\circ\Omega\circ\varphi=\varphi\}$. We define an action on 
$\mathcal{P}(\boldsymbol{a}; \boldsymbol{b})$ by
$$(u,\varphi)\cdot\beta=\varphi^{-1}\beta u^{-1}, \forall (u,\varphi)\in\gl(\cA)\times G.$$
For $G_0=\gl(\cA)\times G/\pm(\id,\id)$, the above action acts freely on $\mathcal{P}(\boldsymbol{a}; \boldsymbol{b})$ 
and two monads in $\mathcal{P}(\boldsymbol{a}; \boldsymbol{b})$ are isomorphic if, and 
only if, they lie in the same $G_0$-orbit. In this manner, we obtain a well-defined map
$$\mathcal{P}(\boldsymbol{a}; \boldsymbol{b})\longrightarrow\mathcal{V}(\boldsymbol{a}; \boldsymbol{b})$$
which is an isomorphism, see \cite[Section 8]{MF2021}. Therefore, there is an injective morfism
$$\mathcal{V}(\boldsymbol{a}; \boldsymbol{b})\longrightarrow\mathcal{B}(0,c_2),$$
where $c_2$ is expressed in terms of $\boldsymbol{a}$ and $\boldsymbol{b}$ as in formula \eqref{eq:monad-c2}.

As stated in the Introduction, there are three known irreducible components of $\mathcal{B}(0,c_2)$: the \textit{Instanton component} $I_{c_2}$ whose generic 
point is a stable rank 2 bundle $\E$ on $\mathbb{P}^3$ given as cohomology of a minimal monad $\mathbf{M}$ in $\mathcal{P}(\boldsymbol{a}; \boldsymbol{b})$ with $\boldsymbol{a}=(1^{c_2})$ and $\boldsymbol{b}=(0^{2c_2+2})$, i.e. $\mathbf{M}$ has the form  
$$0\longrightarrow\mathcal{O}_{\mathbb{P}^3}(-1)^{\oplus c_2}\stackrel{\alpha}{\longrightarrow}{\mathcal{O}_{\mathbb{P}^3}}^{\oplus (2c_2+2)}\stackrel{\beta}{\longrightarrow}\mathcal{O}_{\mathbb{P}^3}(1)^{\oplus c_2}\longrightarrow0.$$
It was shown in \cite{H78} that $\dim I_{c_2}=8c_2-3$. The other known irreducible component of $\mathcal{B}(0,c_2)$ is the \textit{Ein component} 
$\mathcal{G}(r,s,t)$ with $r, s,t$ integers 
such that $c_2=t^2-r^2-s^2$, and whose generic point is a stable rank 2 bundle $\mathcal{F}$ arising as 
cohomology of an Ein minimal monad as defined in Example \ref{ein-monad}. If we expand Equation 2.2.B in \cite{Ein88} which 
gives the dimension of $\mathcal{G}(r,s,t)$ we have
\begin{eqnarray}\label{ein-dim}
\dim \mathcal{G}(r,s,t)=&\binom{t+r+3}{3}+\binom{t+s+3}{3}+\binom{t-r+3}{3}+\binom{t-s+3}{3}-\binom{r+s+3}{3}\nonumber\\
 &-\binom{s-r+3}{3}-\binom{2r+3}{3}-\binom{2s+3}{3}-3-\mu(r,s),
\end{eqnarray}
where $\mu(r,s)=4$ if $r=s=0$ or $\mu(r,s)=1$ if $r=0<s$ or $r=s>0$ or $\mu(r,s)=0$ if $0<r\leq s$.

Finally, we have the so-called \textit{modified Instanton component} $\mathcal{M}_{c_2}$. Let $a, m$ be integers the condition that 
$$\mbox{ neither } 5\leq a\leq11, 1+\epsilon\leq m+\epsilon\leq a-4 \mbox{ or } a\geq12, 1+\epsilon\leq m+\epsilon\leq a+1$$
with $c_2=2m+\epsilon+a^2, \epsilon\in\{0,1\}$. According to \cite[Theorem 1]{tikhomirov2019}, there exist a family of irreducible components $\mathcal{M}_{c_2}$ of $\mathcal{B}(0,c_2)$ of dimension 
\begin{eqnarray*}
\dim \mathcal{M}_{2m+\epsilon+a^2}=4\binom{a+3}{3}+(2m+\epsilon)(10-a)-11.
\end{eqnarray*}
\subsection{Serre Correspondence}
In addition to obtaining rank 2 vector bundles on $\mathbb{P}^3$ as cohomology of a minimal 
monad (as in Subsection \ref{sub-monads}), we can also construct rank 2 vector bundles via the Hartshorne-Serre 
correspondence. More precisely for any fixed invertible sheaf $\mathcal{L}=\OO_{\mathbb{P}^3}(c)$, let $(Y,\xi)$ be a 
pair where $Y$ is a local complete intersection Cohen-Macaulay curve on $\mathbb{P}^3$ and $\xi\in H^0(\omega_Y(4-c))$ is a 
global section which generates the sheaf $\omega_Y(4-c)$, except finitely many points.  Using 
the isomorphism $H^0(\omega_Y(4-c))\simeq\Ext^1(\mathcal{I}_Y,\mathcal{O}_{\mathbb{P}^3}(-c))$,
the global section $\xi$ induces the exact sequence 
\begin{equation}\label{seq-serre}
 0\rightarrow\mathcal{O}_{\mathbb{P}^3}(-c)\stackrel{\cdot s}{\rightarrow}\mathcal{E}\rightarrow\mathcal{I}_Y\rightarrow0,   
\end{equation}
so that $\mathcal{E}$ is a rank 2 bundle on $\mathbb{P}^3$ with 
$c_1(\mathcal{E})=c$ and $c_2(\mathcal{E})=\deg Y.$
Conversely, if $\mathcal{E}$ is a rank 2 vector bundle on $\mathbb{P}^3$ with $c_1(\mathcal{E})=c_1$ and 
$s\in H^0(\E(k))$ is a global section whose zero-set has codimension 2, then one obtains an exact 
sequence as in \eqref{seq-serre} and a pair $(Y,\xi)$ where $Y=(s)_0$ and 
$\xi\in\Ext^1(\mathcal{I}_Y,\mathcal{O}_{\mathbb{P}^3}(-c_1))$, see \cite[Theorem 4.1]{H80} for more details.\\


\section{Computation of $\Ext^1$ Groups}\label{section-ext}
\indent Only a small number of results assist in determining the dimension of the group $\Ext^1$. Among 
these, \cite[Lemma 4.1.7]{Okonek} is one of the most familiar, though it demands the vanishing of various cohomology 
groups, thereby limiting its range of application. The goal of this section is to compute the dimension 
of $\Ext^1(\E,\E)$ and $\Ext^2(\E,\E)$ for a stable rank 2 bundle $\E\in\mathcal{B}(0,c_2)$ on 
$\mathbb{P}^3$ arising as the cohomology of a minimal Horrocks monad as in \eqref{eq:hoorcks-monad}, where the maps 
$\alpha$ and $\beta$ are known explicitly.  As a consequence, in Subsection \ref{family2}, we provide examples 
of new irreducible components of $\mathcal{B}(0,c_2)$ that contain singular points.\\
Let $\E\in\mathcal{B}(0,c_2)$ be a stable rank 2 bundle on $\mathbb{P}^3$. From sympletic structure of $\E$ it follows that $\End(\E,\E)\simeq \E\otimes\E$ and from deformation Theory, see for example \cite{Sernesi}, we know that 
$$\dim T_{[\E]}\mathcal{B}(0,c_2)=\dim\Ext^1(\E,\E).$$
Consequently $\dim\mathcal{B}(0,c_2)\leq\dim\Ext^1(\E,\E),$ where the equality is true if, and only if, $\E$ is 
a smooth point which is equivalent to the vanishing $h^2(\End(\E,\E))=0$. Since $c_1(\E)=0$, Proposition 4.2 of \cite{H78} yields
$$\dim \Ext^1(\E,\E)-\dim\Ext^2(\E,\E)=8c_2-3.$$
For a minimal monad as in \eqref{eq:hoorcks-monad} whose cohomology is a stable rank 2 bundle $\E$, let $A$ and $B$ be the 
matrices of the maps $\alpha$ and $\beta$, respectively. We provide the following outline of 
Macaulay2 code for computing the dimension of $\Ext^1(\E,\E)$: 
\begin{lstlisting}[language=Macaulay2]
S = QQ[x,y,z,w], X=Proj S;
-- Define the source and target modules for alpha and beta
H = (S(b_1)++ S(-b_1))++..++(S(b_{s+1}).. S(-b_{s+1}))
L1= S^{-a_1}++S^{-a_2}++..++S^{-a_s}; L2 = S^{a_1}++S^{a_2}++..++S^{a_s};
Define the map alpha and beta
beta = map(L2, H, B); alpha=map(H, L1,A);
-- Construct the vector bundle E
F= ker sheaf beta;I=image sheaf alpha; E=F/I
-- Computing the dimension of the tangent space on E
T=E**E;
HH^1 T
\end{lstlisting}
To illustrate the method, we work through an explicit example.
\begin{example}\label{ex1}
\textup{Let $\E\in\mathcal{B}(0,9)$ be a rank 2 bundle on $\mathbb{P}^3$ given as cohomology of the minimal monad
\begin{equation*}
0\rightarrow2\cdot\mathcal{O}_{\mathbb{P}^{3}}(-3)\stackrel{\alpha}{\rightarrow}2\cdot\mathcal{O}_{\mathbb{P}^{3}}(2)\oplus2\cdot\mathcal{O}_{\mathbb{P}^{3}}(-2)\oplus \mathcal{O}_{\mathbb{P}^{3}}(1)\oplus\mathcal{O}_{\mathbb{P}^{3}}(-1)\stackrel{\beta}{\rightarrow}2\cdot\mathcal{O}_{\mathbb{P}^{3}}(3)\rightarrow0,
\end{equation*}
where 
$$B=\left(
\begin{array}{cccccc}
x& 0&w^{5}& z^{5}& y^{2}& z^{4}\\
y& x& z^{5}& w^{5}& x^{3}& 0\\
\end{array} \right),
~~A=\left(
\begin{array}{cc}
w^{5}&0\\
z^{5}&w^{5}+xz^{4}\\
-x&0\\
-y&-x\\
0&-z^{4}\\
yz&xz+y^{2}\\
\end{array}
\right)$$
are the matrices of $\beta$ and $\alpha$ respectively. The code below, executed in Macaulay2, shows that $\dim\Ext^1(\E,\E)=79$.}
\begin{lstlisting}[language=Macaulay2]
S = QQ[x,y,z,w],X=Proj S;
H = S^{2}^2 ++ S^{{-2}}^2 ++ S^{1} ++ S^{(-1)};
L1= S^{-3}^2, L2 = S^{3}^2;

beta = map(L2, H, {
{x, 0, w^(5), z^(5), y^(2), z^(4)},
{y, x, z^(5), w^(5), x^(2), 0}})

alpha=map(H, L1,{{w^(5), 0},{z^(5), w^(5) + x*z^(4)},
{-x, 0},{-y, -x},{0, -z^(4)},{y*z, x*z + y^(2)}})
-- Construct the vector bundle E
F= ker sheaf beta; I=image sheaf alpha; E=F/I
-- Computing the dimension of the tangent space on E
T=E**E; HH^1 T
\end{lstlisting}

\end{example}

\begin{remark}
This method has two limitations: we must know the matrices of the maps $\alpha$ and $\beta$; the computation of $\Ext^1$ can 
be extensive, depending on the entries of these matrices.
\end{remark}

The moduli space $\mathcal{B}(0,c_2)$ is completely classified for $c_2\leq5$, so the next case of interest 
is $c_2=6$. The moduli space $\mathcal{B}(0,6)$ has only two known irreducible components: the \textit{instanton 
component} whose generic point is a stable rank 2 bundle given as cohomology of a minimal monad 
with $\boldsymbol{a}=(1^6), \boldsymbol{b}=(0^{14})$ and the \textit{modified Ein component} $\mathcal{M}_6$ whose 
generic point is a stable rank 2 bundle given as cohomology of a minimal monad with 
$\boldsymbol{a}=(2,1^2), \boldsymbol{b}=(0^{8})$.

Using the same Macaulay2 code, we present in Table \ref{ext-tab} the $\dim T_{[\E]}\mathcal{B}(0,6)=e$ 
for a stable bundle $\E$ arising as the cohomology of a minimal monad corresponding to case 6(3) or 6(4) in 
Table 5.3 of Hartshorne and Rao \cite{HR91}. These dimensions contribute to the classification of 
the moduli space $\mathcal{B}(0,6)$, see Remark \ref{rmk1} for example.

\begin{table}[h]
\centering
\begin{tabular}{|l|l|l|l|l|}
\hline
\rowcolor[HTML]{EFEFEF} 
\multicolumn{1}{|c|}{\cellcolor[HTML]{EFEFEF}$\boldsymbol{a}$}& \multicolumn{1}{|c|}{\cellcolor[HTML]{EFEFEF}$\boldsymbol{b}$}& \multicolumn{1}{c|}{\cellcolor[HTML]{EFEFEF}$e$} & \multicolumn{1}{c|}{\cellcolor[HTML]{EFEFEF}$\alpha$} & \multicolumn{1}{c|}{\cellcolor[HTML]{EFEFEF}$\beta$} \\ \hline
$0^2,1^2$                                       & $2^2$   &45&  
$\begin{pmatrix}
z^{3}&x^{2}w\\
x^{3}&z^{3}\\
0&-y\,w+w^{2}\\
0&-x^{2}\\
-w&-y\\
-y&0
\end{pmatrix}         $                                           &   $\begin{pmatrix}
y&0&x^{2}&w^{2}&0&z^{3}\\
w&y&0&w^{2}&z^{3}&x^{3}
\end{pmatrix}$                                                   \\ \hline
$0^4,2$& $3,1$ &48       &$ \begin{pmatrix}
w^{5} & x^{3} \\
0 & -w \\
0 & z \\
x^{3} & 0 \\
-z^{3} & -y \\
-y & 0
\end{pmatrix}$
                                                 &   $\begin{pmatrix}
y & 0 & 0 & z^3 & x^3 & w^{5} \\
0 & z & w & y & 0 & x^{3}
\end{pmatrix}$                                                   \\ \hline
                               $0^2,1^3$ & $2^2,1$&45 &    $\begin{pmatrix}
0 & -w^3 & y^2 \\
x^3 & w^3 & -yz \\
z^3 & x^3 & -yw + zw \\
0 & 0 & -z + w \\
0 & 0 & x \\
-w & -y & 0 \\
-y & 0 & 0 \\
w & z & 0
\end{pmatrix}$                                                   &     $\left(\!\begin{array}{ccc}
z&w&0\\
y&w&0\\
0&y&0\\
0&0&x\\
0&0&z-w\\
w^{3}&x^{3}&y\,z\\
x^{3}&z^{3}&y\,w-z\,w\\
w^{3}&0&y^{2}
\end{array}\!\right)^{T}$                 \\ \hline
\end{tabular}
\caption{Computation of the $\Ext^1(\E,\E)$ for some stable bundles with $c_1(\E)=0, c_2(\E)=6$.}
\label{ext-tab}
\end{table}
The moduli space $\mathcal{B}(-1,6)$ has only three known irreducible components: the Instanton component 
of dimension 43 and two Ein components of dimension 43 and 50, respectively. Furthermore, 
\cite[Theorem 18]{MF2021} ensures the existence of a new component $M_4$ although its dimension has not 
been computed. The component $\mathrm{M}_4$ contains the stable rank 2 bundles on $\mathbb{P}^3$ that are given as cohomology of minimal 
monads as in \eqref{monad-c1=-1}. Recall that for any $\E\in\mathcal{B}(-1,6)$ we have 
$\E^\vee(-1)\cong \E$ and consequently $\operatorname{End}(\E)\cong \E(1)\otimes\E$.
\begin{theorem}\label{ex2}
Let $\mathrm{M}_4$ be the irreducible component of $\mathcal{B}(-1,6)$ containing stable rank 2 bundles on $\mathbb{P}^3$ 
given as cohomology of minimal monads of the form
\begin{equation}\label{monad-c1=-1}
0\rightarrow2\cdot\mathcal{O}_{\mathbb{P}^{3}}(-3)\stackrel{\alpha}{\rightarrow}3\cdot\mathcal{O}_{\mathbb{P}^{3}}(1)\oplus3\cdot\mathcal{O}_{\mathbb{P}^{3}}(-2)\stackrel{\beta}{\rightarrow}2\cdot\mathcal{O}_{\mathbb{P}^{3}}(2)\rightarrow0.
\end{equation}
Then $\dim \mathrm{M}_4=45$.
\end{theorem}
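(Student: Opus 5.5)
The plan is to squeeze $\dim \mathrm{M}_4$ between a lower bound coming from the monad parametrization and an upper bound coming from the tangent space at one explicit point. From \cite[Theorem 18]{MF2021} we already know that $\mathrm{M}_4$ is an irreducible component of $\mathcal{B}(-1,6)$ with $\dim \mathrm{M}_4\geq 45$. This bound comes from counting parameters in the family of minimal monads \eqref{monad-c1=-1} modulo the group of monad automorphisms; it matches the count sketched for $\mathcal{P}(\boldsymbol{a};\boldsymbol{b})\to\mathcal{V}(\boldsymbol{a};\boldsymbol{b})$ in Section \ref{section2}. Hence only the inequality $\dim \mathrm{M}_4\leq 45$ remains. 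For every point $[\E]\in \mathrm{M}_4$ we have
\[
\dim \mathrm{M}_4\leq \dim T_{[\E]}\mathcal{B}(-1,6)=\dim\Ext^1(\E,\E)=h^1(\E(1)\otimes\E),
\]
using $\operatorname{End}(\E)\cong\E(1)\otimes\E$. It therefore suffices to exhibit a single stable bundle $\E$ in $\mathrm{M}_4$ with $h^1(\E(1)\otimes\E)=45$.

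For the construction, I will write down explicit matrices $A$ (of size $6\times 2$) and $B$ (of size $2\times 6$) with polynomial entries of the degrees forced by \eqref{monad-c1=-1}. The entries of $B$ have degrees $1$ on the three $\mathcal{O}(1)$ summands and $4$ on the three $\mathcal{O}(-2)$ summands; the entries of $A$ have degrees $4$ and $1$ accordingly. The matrices must satisfy three conditions:
\begin{enumerate}
\item $BA=0$;
\item $B$ is surjective (the $2\times 2$ minors of $B$ have no common zero) and $A$ is injective as a bundle map;
\item no entry is a nonzero constant, so that the monad is minimal.
\end{enumerate}
A convenient ansatz is the symplectic-type shape $A=J B^{T}$ twisted appropriately, mimicking the Ein monad of Example \ref{ein-monad} and the matrices in Table \ref{ext-tab}. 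Here $J$ pairs each $\mathcal{O}(1)$ summand with an $\mathcal{O}(-2)$ summand, reflecting $\E^\vee(-1)\cong\E$. I would choose sparse entries in $x,y,z,w$ (linear forms against quartic monomials), so that $BA=0$ reduces to a few cancellations and the non-degeneracy can be checked in Macaulay2 by verifying that the ideal of maximal minors is irrelevant.

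Next I will verify that the cohomology $\E$ is stable, i.e. $h^0(\E)=0$ since $c_1=-1$. This follows from the display of the monad: $H^0$ of the kernel $\ker\beta$ injects into $H^0(3\mathcal{O}(1)\oplus 3\mathcal{O}(-2))$, and one checks that no section of $3\mathcal{O}(1)$ is killed by $\beta$, which is a linear-algebra check on the linear entries of $B$. Since $\mathrm{M}_4$ is, by \cite[Theorem 18]{MF2021}, the closure of the family of such cohomologies, $[\E]\in \mathrm{M}_4$. I will then run the Macaulay2 code of this section with $T=\E(1)\otimes\E$, namely \texttt{E**E**OO(1)}, and read off $h^1(T)=45$. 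As a consistency check, Proposition 4.2 of \cite{H78} gives $h^1-h^2=8\cdot 6-3-2=43$, so we expect $h^2(T)=2$: the point is singular on $\mathcal{B}(-1,6)$, but this does not affect the bound. Combining the two inequalities gives $\dim \mathrm{M}_4=45$.

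The main obstacle is finding matrices $A,B$ that satisfy $BA=0$, keep the monad nondegenerate and minimal, and are also general enough that $h^1$ attains the minimum value $45$ rather than something larger. Special choices tend to inflate $\Ext^1$. I would first try random-coefficient matrices within the symplectic ansatz, solving $BA=0$ linearly for $A$ given a random $B$. If the resulting $h^1$ exceeds $45$, I would perturb toward more generic entries until the computed value drops to $45$. Upper semicontinuity guarantees that $45$ is attained on a dense open subset once it is attained at one point.
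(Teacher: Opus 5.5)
This is the paper's proof. The lower bound is quoted from \cite[Theorem 18]{MF2021}, and the upper bound comes from a single explicit monad on which Macaulay2 gives $h^1(\E(1)\otimes\E)=45$. The paper's choice is $B=\begin{pmatrix} y & x & z & 0 & w^{4} & x^{4}\\ w & z & y & x^{4} & 0 & w^{4}\end{pmatrix}$ with $A=JB^{T}$, which is exactly your symplectic ansatz. With that ansatz, $BA=BJB^{T}=0$ is a single quadratic condition on $B$, which the paper's $B$ meets by pairwise cancellation; a random $B$ does not satisfy it, so you cannot "solve linearly for $A$" as your search plan suggests.

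Your side remark that $[\E]$ is singular is wrong. $h^2(\E(1)\otimes\E)=2$ only says the obstruction space is nonzero. Since $\dim T_{[\E]}\mathcal{B}(-1,6)=45=\dim\mathrm{M}_4$, the point $[\E]$ is in fact smooth and lies on no other component, as the paper concludes.
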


\begin{proof}
From Theorem~18 of \cite{MF2021}, we know that $\dim \mathrm{M}_4\geq45$. Conversely, for $\E\in \mathrm{M}_4$ given as cohomology of 
a minimal monad as in \eqref{monad-c1=-1} where the maps 
$\beta$ and $\alpha$ are represented, respectively, by the matrices
$$B=\begin{pmatrix}
  y & x & z & 0 & w^{4} & x^{4} \\
  w & z & y & x^{4} & 0 & w^{4}
\end{pmatrix},
~~A=\begin{pmatrix}
  0 & x^{4} \\
  w^{4} & 0 \\
  x^{4} & w^{4} \\
  -y & -w \\
  -x & -z \\
  -z & -y
\end{pmatrix},$$
we employ the following Macaulay2 code to compute $\dim\operatorname{Ext}^1(\E,\E)$:
\begin{lstlisting}[language=Macaulay2]
S = QQ[x,y,z,w];X=Proj S;
H = S^{1}^3++ S^{-2}^3
L1=S^{-3}^2
L2=S^{2}^2
B=map(L2,H,matrix{{y,x,z,0,w^4,x^4},{w,z,y,x^4,0,w^4}})
A=map(H,L1,matrix{{0,x^4},{w^4,0},{x^4,w^4},{-y,-w},{-x,-z},{-z,-y}})
E=(ker sheaf B)/(image sheaf A)
T=(E(1))**E;
HH^1 T
\end{lstlisting}
The output of the above code shows that $\dim\Ext^1(\E,\E)=45$. Since the tangent space at $\E$ satisfies 
$\dim\mathrm{T}_{\E}\mathrm{M}_4 = \dim\Ext^1(\E,\E)=45$, 
it follows that $\dim\mathrm{M}_4\leq45$. Therefore, the dimension of $\mathrm{M}_4$ is exactly $45$ and that $\E$ is a 
smooth point of $\mathrm{M}_4$.
\end{proof}

\section{Some families}\label{section-families}
In this section, we establish the existence of two infinite families of irreducible components, $\mathbf{V_0}$ and 
$\mathbf{V}_1$, of the moduli space $\mathcal{B}(0,c_2).$
 \subsection{A family of rank 2 bundles $\mathcal{V}(\boldsymbol{a};\boldsymbol{b})$ with $\boldsymbol{a}=({(a-1)}^2,0^2)$ and $\boldsymbol{b}=(a^2)$}\label{family1}
Let $a>2$ be an integer. If we consider the maps $\alpha$ and $\beta$ such that
 $$\alpha=\begin{pmatrix}
     z^{2a-1}&x^aw^{a-1}\\
     x^{2a-1}& z^{2a-1}\\
     0& -yw^{a-1}+w^a\\ 
     0&-x^a\\
     -w&-y\\
     -y&0
     \end{pmatrix}\ \ \ \ \mbox{ and } \ \ \ \beta=\begin{pmatrix}
         y & 0 & x^a & w^a & 0 & z^{2a-1}\\
         w&y&0&w^a&z^{2a-1}&x^{2a-1}
     \end{pmatrix},$$
then we can observe that $\alpha$ is injective, $\beta$ is surjective and $\beta\circ\alpha=0$. This means that the family of complexes  
\begin{equation}\label{monad0}
\mathbf{M_0}:0\rightarrow2\cdot\mathcal{O}_{\mathbb{P}^{3}}(-a)\stackrel{\alpha}{\rightarrow}2\cdot\mathcal{O}_{\mathbb{P}^{3}}(a-1)\oplus2\cdot\mathcal{O}_{\mathbb{P}^{3}}\oplus 2\cdot\mathcal{O}_{\mathbb{P}^{3}}(1-a)\stackrel{\beta}{\rightarrow}2\cdot\mathcal{O}_{\mathbb{P}^{3}}(a)\rightarrow0,
\end{equation}
provides a family of minimal Horrocks monads. For a fixed integer $a\geq2$, the cohomology of the associated  minimal monad is a 
stable rank 2 bundle $\mathcal{E}$ on $\mathbb{P}^3$ with Chern 
classes $c_1(\mathcal{E})=0, c_2(\mathcal{E})=4a-2$. Moreover, from the display to the minimal monad associated, we obtain
$$h^1(\mathcal{E}(-l))=2h^0(\mathcal{O}_{\mathbb{P}^{3}}(a-l))-2h^0(\mathcal{O}_{\mathbb{P}^{3}}(a-l-1))=\left\{\begin{array}{cc}
  0   & l\geq a+1 \\
  (a-l+1)(a-l+2)   & 1\leq l\leq a. 
\end{array}
\right.$$
Denoting by $\mathcal{X}$ the spectrum of $\mathcal{E}$, it follows from \cite[Property S4]{HR91} and the above formula that
$$\#\{k_j\in\mathcal{X}:k_j\geq i-1\}=h^1(\mathcal{E}(-i))-h^1(\mathcal{E}(-i-1))=2, i=1,\cdots, a,$$
which means
$$\mathcal{X}=\{{(1-a)}^2,\cdots,-1^2,0^2,1^2,\cdots, {(a-1)}^2\}.$$
On the other hand, since the family of minimal monads in \eqref{monad0} is homotopy free, we can apply 
\cite[formula 33]{MF2021} to compute its dimension. In this case, we have
\begin{equation*}
h=16+4\binom{a+3}{3}+4\binom{2a+2}{3}; w=\binom{2a+3}{3}, g=4; s=7+4\binom{a+2}{3}+3\binom{2a+1}{3}.
\end{equation*}
If we consider $\boldsymbol{a}=(a^2)$ and $\boldsymbol{b}=({(a-1)}^2,0^2)$, then we proved that
\begin{equation}\label{dim-f}
\dim\mathcal{V}(\boldsymbol{a}; \boldsymbol{b})=6a^2+6a+8.    
\end{equation}
 
\begin{remark}\label{rmk1}
From \eqref{dim-f} we observe that 
 $$\dim\mathcal{V}(\boldsymbol{a};\boldsymbol{b})>8c_2-3=32a-19, \mbox{ whenever } a>2,$$
 implying that the family $\mathcal{V}(\boldsymbol{a};\boldsymbol{b})$ cannot contained in the Instanton component 
 of $\mathcal{B}(0,4a-2)$. For $a=2$ the family of minimal Horrocks monads
\begin{equation}
0\rightarrow2\cdot\mathcal{O}_{\mathbb{P}^{3}}(-2)\stackrel{\alpha}{\rightarrow}2\cdot\mathcal{O}_{\mathbb{P}^{3}}(1)\oplus2\cdot\mathcal{O}_{\mathbb{P}^{3}}\oplus 2\cdot\mathcal{O}_{\mathbb{P}^{3}}(-1)\stackrel{\beta}{\rightarrow}2\cdot\mathcal{O}_{\mathbb{P}^{3}}(2)\rightarrow0
\end{equation}
coincides with \cite[Table 5.3, $c_2=6$, (3)]{HR91}. This family of minimal monads has dimension $44$ which is smaller than the expected dimension 45. This indicates that the family of bundles given as cohomology of minimal monads of these minimal monads does not fill an irreducible component of $\mathcal{B}(0,6)$. 
\end{remark}

For $a=3$, there is no Ein component $\mathcal{G}(r,s,t)$ because the only possible pairs of $(r,s)$ are $(r,s)=(1,1)$ or $(r,s)=(1,3)$ which would require $t^2=8$ or $t^2=20$, respectively-neither of which yields an integer 
$t$ satisfying the required conditions. The other particular case is $a=4$ for which the only solution is 
$(r,s,t)=(1,1,4)$. Here the Ein irreducible component has dimension $117$ 
whereas $\mathcal{V}(4;1^2)$ has dimension $128$.

\begin{proposition}\label{prop-dim1}
Let $t, r, s$ be positive integers such that $t>r+s$. If $\mathcal{G}(t; r,s)$ is an Ein irreducible component of the 
moduli space $\mathcal{B}(0,4a-2)$, then 
$$\dim\mathcal{V}(\boldsymbol{a}; \boldsymbol{b})>\dim \mathcal{G}(t; r,s)$$
and consequently, the 
family $\mathcal{V}(\boldsymbol{a}; \boldsymbol{b})$ is not contained in any Ein irreducible component.
\end{proposition}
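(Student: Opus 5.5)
We have to compare $\dim\mathcal{V}(\boldsymbol{a};\boldsymbol{b})=6a^2+6a+8$ from \eqref{dim-f} with the expression \eqref{ein-dim} for $\dim\mathcal{G}(r,s,t)$, under the constraints $c_2=t^2-r^2-s^2=4a-2$, $t>r+s$, and $0<r\le s$ (so $\mu(r,s)\le 1$). The plan is to bound $\dim\mathcal{G}$ from above by a function of $t$ alone, bound $t$ in terms of $a$, and compare.

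First, control the size of $t$. From $t>r+s\ge 2$ and $t^2=4a-2+r^2+s^2$ with $r,s\ge1$, we get $t^2\ge 4a$, so $t\ge 2\sqrt a$. In the other direction, $t\ge r+s+1$ gives
\[
r^2+s^2\le (r+s)^2\le (t-1)^2 .
\]
Hence $4a-2=t^2-r^2-s^2\ge t^2-(t-1)^2=2t-1$, so $t\le 2a-\tfrac12$. The finer consequence $4a-2\ge t^2-(r^2+s^2)$ with $r^2+s^2$ as large as possible will also be used.

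Next, rewrite \eqref{ein-dim}. The four positive terms are
\[
\binom{t\pm r+3}{3},\qquad \binom{t\pm s+3}{3}.
\]
Pairing $\binom{t+r+3}{3}+\binom{t-r+3}{3}$ gives an expression even in $r$, of the form $2\binom{t+3}{3}+(t+2)r^2$ (exactly, since $\binom{x+3}{3}$ is a cubic in $x$ with symmetric second difference). Thus
\[
\dim\mathcal{G}\approx 4\binom{t+3}{3}+(t+2)(r^2+s^2)-\binom{r+s+3}{3}-\binom{s-r+3}{3}-\binom{2r+3}{3}-\binom{2s+3}{3}-3-\mu .
\]
Substituting $r^2+s^2=t^2-4a+2$ and dropping or lower-bounding the negative binomials (each is at least something like $\tfrac{4}{3}r^3$ and $\tfrac43 s^3$) yields an explicit upper bound. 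Roughly, $\dim\mathcal G\lesssim \tfrac23t^3+(t+2)(t^2-4a)-\tfrac{8}{3}(r^3+s^3)$, where the negative cubic terms compensate once $r^2+s^2\approx t^2$.

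The main obstacle is that the crude bound $t\le 2a$ makes $\tfrac23 t^3$ of order $a^3$, which exceeds $6a^2$. So the negative terms must be used genuinely. I would split into two cases. In the first, $t$ is small, say $t\le c\sqrt a$; then $\dim\mathcal G=O(a^{3/2})$ comfortably beats $6a^2+6a+8$ for $a\ge 5$. In the second, $t$ is large, which forces $r^2+s^2=t^2-4a+2$ to be close to $t^2$. With $r+s<t$ this forces $r$ small and $s\approx t$, since $r^2+s^2\le r^2+(t-1-r)^2$. Concretely, $t^2-4a+2\le r^2+(t-1-r)^2$ gives $2r(t-1-r)\le 4a-2t-1$, so $r(t-r)$ is $O(a)$. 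I would then exploit the cancellation between $\binom{t+s+3}{3}+\binom{t-s+3}{3}$ and $\binom{2s+3}{3}+\binom{r+s+3}{3}$ when $s\approx t-r-1$. Setting $s=t-1-r-d$, I would expand everything as a polynomial in $t,r,d$, expecting a leading contribution of order $t^2(r+d)$, which is $O(a\,t)$ and hence $O(a^2)$ with a constant below $6$.

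Finally, the small cases $a=3,4$ (and any small $a$ left over by the asymptotic estimates) are checked directly, as done above the statement. For $a=4$ this gives $117<128$, and for $a=3$ there is no Ein component. The "consequently" part follows because $\mathcal{V}(\boldsymbol{a};\boldsymbol{b})$ is irreducible and of larger dimension, so it cannot lie inside $\mathcal{G}(t;r,s)$.
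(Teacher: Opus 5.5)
Your proposal has a genuine gap. It never uses the arithmetic constraint that actually makes the inequality true, and the asymptotic step you describe fails without it. Since $t^2-r^2-s^2\equiv t-r-s\pmod 2$ and $c_2=4a-2$ is even, one must have $t\equiv r+s\pmod 2$, so $t>r+s$ upgrades to $t\ge r+s+2$. (The paper gets even more from $c_2\equiv 2\pmod 4$: $t$ even and $r,s$ odd.) This is essential. In your parametrization put $e=t-s=1+r+d$; then exactly
\[
\tfrac23t^3+ts^2-\tfrac53s^3=3et^2-4e^2t+\tfrac53e^3 .
\]
So for fixed $r,e$ one has $\dim\mathcal G=3et^2+O(t)$, while $4a-2=e(2t-e)-r^2$ gives $6a^2=\tfrac32e^2t^2+O(t)$. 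The leading term of $\dim\mathcal V-\dim\mathcal G$ is therefore $\tfrac32e(e-2)t^2$. It vanishes for $e=2$, i.e.\ $r=1$, $d=0$, $t=s+2$, a case your stated constraints $t>r+s$, $0<r\le s$ do not exclude. There the comparison is actually false. One has $\dim\mathcal G(1,s,s+2)=6s^2+24s+26-\mu$. On the other side, with $a=(c_2+2)/4=(4s+5)/4$, one gets $6a^2+6a+8=6s^2+21s+\tfrac{199}{8}$. The difference is $-3s-\tfrac98+\mu<0$. So your expected ``constant below $6$'' cannot be derived from the constraints you use. It holds only because parity forces $e\ge r+2\ge 3$.

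The other steps do not close either.
\begin{itemize}
\item Your ``rough'' bound $\dim\mathcal G\lesssim\tfrac23t^3+(t+2)(t^2-4a)-\tfrac83(r^3+s^3)$ is not an upper bound. The four subtracted binomials have cubic part $\tfrac43r^3+r^2s+\tfrac53s^3$, which is smaller than $\tfrac83(r^3+s^3)$ for $r\le s$. For $s\approx t$ it produces a spurious $-t^3$ instead of the cancellation displayed above.
\item The split at $t\le c\sqrt a$ leaves $c$ unspecified. Case 1 gives only $\dim\mathcal G\lesssim\tfrac53t^3\le\tfrac53c^3a^{3/2}$, which beats $6a^2$ only once $a$ is of order $c^6$.
\item In Case 2, your inequality $2r(t-1-r)\le 4a-2t-1$ yields only $r/t\lesssim 4/c^2$, so ``$r$ small and $s\approx t$'' needs $c$ large. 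The intermediate range and the finite list of leftover $a$ are never controlled.
\end{itemize}

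The paper sidesteps all of this. After the parity reduction it treats $h(r,s,t)=\dim\mathcal V-\dim\mathcal G$ as an explicit polynomial. For fixed $r,s$ it shows $h_{tt}>0$ and $h_t(r+s+2)>0$, so $h$ is increasing on $[r+s+2,\infty)$. It then checks
\[
h(r,s,r+s+2)=\tfrac32r^2s^2+r^2\bigl(4s-\tfrac r3\bigr)+3rs^2+r\bigl(8s-\tfrac{11}3\bigr)-\tfrac52+\mu>0 .
\]
The base point $r+s+2$ (rather than $r+s+1$) is exactly where parity enters, and no asymptotics or small-$a$ checks are needed.
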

\begin{proof}
We have $c_2=t^2-r^2-s^2=4a-2$ and if $s$ is an even integer (or $r$ is an even integer) then $s=2k$ for some integer $k$ and thus $(t-r)(t+r)=2(2a+2k^2-1)$ implying $2t$ an odd integer, which is a contradiction. Hence, $r,s$ are necessarily odd integers. Furthermore, the condition $t>r+s$ implies $rs<2a-1$. All information concerning $t, r, s$ and $a$ is presented below.
\begin{itemize}
    \item $r, s$ are odd integers while $t$ is an even integer;
    \item $rs<2a-1$;
\item $t\geq r+s+2$;
    \item $t<a$ for all $a>4$.
\end{itemize}

From Equation \eqref{ein-dim}, we have
\begin{eqnarray*}
 \dim \mathcal{G}(t; r,s)=&-\frac{4}{3}r^{3}-r^{2}s-\frac{5}{3}s^{3}+r^{2}t+s^{2}t+\frac{2}{3}t^{3}-4\,r^{2}-4\,s^{2}\\
 &+4\,t^{2}-\frac{11}{3}r-\frac{22}{3}s+\frac{22}{3}t-3-\epsilon, \mbox{ where } \epsilon \in\{0,1\}.   
\end{eqnarray*}
Since $c_2=t^2-r^2-s^2=4a-2$, we obtain $a=(t^2-r^2-s^2+2)/4$ and
$$\begin{matrix}
\dim \mathcal{V}(\boldsymbol{a},\boldsymbol{b})= & \frac{3}{8}t^{4}+\frac{3}{4}r^{2}s^{2}+\frac{3}{8}s^{4}-\frac{3}{4}r^{2}t^{2}-\frac{3}{4}s^{2}t^{2}+\frac{3}{8}t^{4}-3r^{2}-3s^{2}+3t^{2}+\frac{25}{2}.
\end{matrix}$$
 If we define the funcion $h(r,s,t)=\dim \mathcal{V}(\boldsymbol{a},\boldsymbol{b})-\dim \mathcal{G}(r,s,t)$, then $h(r,s,t)$ can be rewritten
 
 \begin{eqnarray*}
 h(r,s,t)=&\frac{3}{8}r^{4}+\frac{3}{4}r^{2}s^{2}+\frac{3}{8}s^{4}-\frac{3}{4}r^{2}t^{2}-\frac{3}{4}s^{2}t^{2}+\frac{3}{8}t^{4}
 +\frac{4}{3}r^{3}+r^{2}s\\
 &+\frac{5}{3}s^{3}-r^{2}t-s^{2}t-\frac{2}{3}t^{3}+r^{2}+s^{2}-t^{2}+\frac{11}{3}r+\frac{22}{3}s-\frac{22}{3}t+\frac{31}{2}+\epsilon.   
 \end{eqnarray*}
For fixed integers $r_0, s_0$ with $s_0\geq r_0\geq 1$, a simple calculus argument shows that $h(r_0,s_0,t)>0$ for all $t\geq r_0+s_0+2$. Indeed, let $h_t(t)$ denote the derivative of $h(r_0,s_0,t)$ with respective to $t$, hence
$$h_t(t)=-\frac{3}{2}{r_0}^{2}t-\frac{3}{2}{s_0}^{2}t+\frac{3}{2}t^{3}-{r_0}^{2}-{s_0}^{2}-2t^{2}-2t-\frac{22}{3}.$$
and 
 $$h_{tt}(t)=-\frac{3}{2}{r_0}^{2}-\frac{3}{2}{s_0}^{2}+\frac{9}{2}t^{2}-4t-2=\frac{3}{2}(-{r_0}^{2}-{s_0}^{2}+t^2)+\frac{6}{2}t^{2}-4t-2=\frac{3}{2}c_2+\frac{6}{2}t^{2}-4t-2,$$
where $c_2=t^2-{r_0}^{2}-{s_0}^{2}$. Since $t\geq r_0+s_0+2\geq 4$, we see that $h_{tt}(t)>0$ whenever $t\geq r_0+s_0+2$. This shows that  $h_t(t)$ is increasing on
$[r_0+s_0+2,\infty)$. Furthermore,
$$h_t(r_0+s_0+2)=3{r_0}^{2}s_0+3{r_0}{s_0}^{2}+3{r_0}^{2}+14r_0s_0+3{s_0}^{2}+8r_0+8s_0-\frac{22}{3}>0,$$ 
which implies $h_t(t)>0$ on this interval. Consequently, $h(t)$ is increasing on  $[r_0+s_0+2,\infty)$ with 
$$\begin{matrix}
    h(r_0+s_0+2)&=&\frac{3}{2}{r_0}^{2}{s_0}^{2}-\frac{1}{3}{r_0}^{3}+4{r_0}^{2}{s_0}+3{r_0}{s_0}^{2}+8{r_0}{s_0}-\frac{11}{3}r_0-\frac{5}{2}+\epsilon\\
    &=&\frac{3}{2}{r_0}^{2}{s_0}^{2}+{r_0}^2(-\frac{1}{3}{r_0}+4{s_0})+3{r_0}{s_0}^{2}+r_0(8{s_0}-\frac{11}{3})-\frac{5}{2}+\epsilon.
\end{matrix}$$
Since we are assuming $t_0>s_0\geq r_0$, it follows that $-\frac{1}{3}{r_0}+4{s_0}>0$ and thus $h(r_0+s_0+2)>0$. By the monotonicity of $h$ we conclude  $h(t)>0$ for all $t \in [r_0+s_0+2,\infty)$.
\end{proof}
\begin{proposition}\label{prop-dim2}
The family of stable rank 2 bundles $\mathcal{V}(\boldsymbol{a};\boldsymbol{b})$, with $\boldsymbol{a}=(a^2)$ and $\boldsymbol{b}=({(a-1)}^2,0^2)$, has dimension larger than the dimension of the modified Instanton component $\mathcal{M}_{c_2}$ of the moduli space $\mathcal{B}(0,c_2)$.
 \end{proposition}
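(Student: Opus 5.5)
The idea is to compare the explicit formula \eqref{dim-f}, $\dim\mathcal{V}(\boldsymbol{a};\boldsymbol{b})=6a^2+6a+8$ with $c_2=4a-2$, against the dimension formula for $\mathcal{M}_{c_2}$ from \cite[Theorem 1]{tikhomirov2019}. To keep the letters apart, write $\mathcal{M}_{c_2}$ with parameters $(a',m,\epsilon)$, so that $c_2=2m+\epsilon+a'^2$ and
\[
\dim\mathcal{M}_{c_2}=4\binom{a'+3}{3}+(2m+\epsilon)(10-a')-11 .
\]
Setting $n=2m+\epsilon\geq 1$, the constraint becomes $n=4a-2-a'^2$. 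Since $c_2$ is even, $\epsilon=0$ and $n$ is even. The admissible ranges from \cite{tikhomirov2019} force $a'\geq 5$ and $1\le m\le a'+1$, so
\[
2\le n\le 2a'+2 .
\]

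The first step is to rewrite $\dim\mathcal{M}_{c_2}$ as a function of $a'$ alone. From $4a=n+a'^2+2$ we get $a=(n+a'^2+2)/4$, so $\dim\mathcal{V}=6a^2+6a+8$ becomes a polynomial in $(a',n)$ of degree $4$ in $a'$. Its leading term is $\tfrac{3}{8}a'^4$. By contrast, $\dim\mathcal{M}_{c_2}=\tfrac{2}{3}(a'+1)(a'+2)(a'+3)+n(10-a')-11$ is only cubic in $a'$ and linear in $n$. Define
\[
h(a',n)=\dim\mathcal{V}-\dim\mathcal{M}_{c_2}.
\]

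The second step is to show $h>0$ on the admissible region $a'\ge 5$, $2\le n\le 2a'+2$. For fixed $a'$, $h$ is quadratic in $n$ with positive leading coefficient $\tfrac{3}{8}$, because $6a^2$ contributes $\tfrac{6}{16}n^2$. Moreover, its linear coefficient in $n$ is $\tfrac{3}{4}(a'^2+2)+\tfrac{3}{2}-(10-a')$, which is positive once $a'\ge 3$. Hence $h$ is increasing in $n$ for $n\ge 0$, and it suffices to check $n=2$ (or $n=1$, to be safe). There $h(a',2)=\tfrac{3}{8}(a'^2+4)^2+\tfrac{3}{2}(a'^2+4)+8-\tfrac{2}{3}(a'+1)(a'+2)(a'+3)-2(10-a')+11$, a quartic in $a'$ with positive leading coefficient. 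I would show it is positive for $a'\ge 5$ by a derivative/monotonicity argument in the same style as Proposition \ref{prop-dim1}, or by directly bounding $\tfrac{3}{8}a'^4\ge \tfrac{15}{8}a'^3$ for $a'\geq 5$ to dominate the cubic terms. Evaluating at $a'=5$ gives roughly $315+43+8-224-10+11>0$.

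The main obstacle is bookkeeping rather than any conceptual issue. One has to confirm the exact admissible parameter region in \cite{tikhomirov2019}, since the condition is stated somewhat awkwardly in the preliminaries. One also has to check that every $c_2=4a-2$ realized by some $\mathcal{M}_{c_2}$ falls in this region, so that a relation $a'^2<4a-2$ holds and the substitution is legitimate. If the region also allows small $m$ with $a'$ between $5$ and $11$, the same monotonicity in $n$ covers it, because only $n\ge1$ was used. The conclusion is that $\dim\mathcal{V}(\boldsymbol{a};\boldsymbol{b})>\dim\mathcal{M}_{c_2}$ whenever $\mathcal{M}_{c_2}$ exists in $\mathcal{B}(0,4a-2)$.
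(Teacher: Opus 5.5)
Your proof is correct, and it organizes the comparison differently from the paper.

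\textbf{The paper's route.} The paper keeps $a$ as the variable. It rewrites $\dim\mathcal{M}_{2m+\epsilon+u^2}$ as $(4a-2)(\tfrac23u+4)-m(\tfrac{10}{3}u-12)+\cdots$ and uses $m\ge1$ to remove the $m$-dependence. It then uses the auxiliary fact that $u<a$ whenever $u\ge5$ to replace $u$ by $a$. This gives an upper bound for $\dim\mathcal{M}_{c_2}$ that is quadratic in $a$. Subtracting it from $6a^2+6a+8$ leaves $\tfrac{10}{3}a^2-\tfrac{38}{3}a+\cdots>0$ for $a\ge4$.

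\textbf{Your route.} You eliminate $a$ and work in Tikhomirov's parameters $(a',n)$. Monotonicity in $n$, valid for $a'\ge3$, reduces everything to the single quartic
$h(a',2)=\tfrac38a'^4-\tfrac23a'^3+\tfrac12a'^2-\tfrac{16}{3}a'+7$, which is positive for $a'\ge3$.

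\textbf{What each buys.} Your route needs no relation between $a'$ and $a$, and it shows a margin growing like $\tfrac38a'^4$. The paper's route gives one bound, uniform over all admissible $(u,m,\epsilon)$ for a fixed $a$. The cost is the extra inequality $u<a$ and the implicit monotonicity of the bound in $u$.

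\textbf{Two small corrections.}
\begin{enumerate}
\item Evenness of $c_2$ does not force $\epsilon=0$. If $a'$ is odd, then $a'^2$ is odd and so $\epsilon=1$. This is harmless, because you only use $n=2m+\epsilon\ge2$, which holds in both cases since $m\ge1$.
\item The paper's proof also treats $u\le4$; note that $\mathcal{M}_6$ in Section~\ref{section-ext} has $a'=2$. It disposes of these cases via Remark~\ref{rmk1}, comparing with the expected dimension $8c_2-3$. Your computation already covers $a'=3,4$ if the formula is used: $h(3,2)=\tfrac{63}{8}$ and $h(4,2)=47$. For $a'=2$ the formula gives exactly $8c_2-3$, so positivity needs $a\ge3$. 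That is the same restriction the paper imposes implicitly through Remark~\ref{rmk1}, and you should state it.
\end{enumerate}
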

 \begin{proof}
There are integers $u, m$ and $\epsilon\in\{0,1\}$ such that $c_2=2m+\epsilon+u^2=4a-2$, where either 
$u\leq4$ or $5\leq u\leq11, 1+\epsilon\leq m+\epsilon\leq u-4$ or $u\geq12, 1+\epsilon\leq m+\epsilon\leq u+1$. For the integers $(m, \epsilon,u)$ such that $c_2=2m+\epsilon+u^2$ with $c_2\leq18$, in \cite[Subsection 4.1]{tikhomirov2019} was showed that the dimension of $\mathcal{M}_{c_2}$ coincides with the expected dimension. From Remark \ref{rmk1}, it follows that $\mathcal{V}(\boldsymbol{a};\boldsymbol{b})$ cannot 
be contained in $\mathcal{M}_{c_2}$ for $u\leq4$. \\
We now examine the two cases that remain. We first observe that if $u\geq a$, 
then $2m+\epsilon+u^2=4a-2\leq 4u-2$ implying $u^2-4u+2+2m+\epsilon\leq0$. This inequality can only hold for 
$u\leq4$. Therefore, $u<a$ whenever $u\geq5$. According to \cite[Equation 53]{tikhomirov2019},
\begin{eqnarray*}
\dim \mathcal{M}_{2m+\epsilon+u^2}=4\binom{u+3}{3}+(2m+\epsilon)(10-u)-11.
\end{eqnarray*}
Dividing $\dim \mathcal{M}_{2m+\epsilon+u^2}$ by $4a-2$ yields
\begin{eqnarray*}
\dim \mathcal{M}_{2m+\epsilon+u^2}=(4a-2)(\frac{2}{3}u+4)+(-m)(\frac{10}{3}u-12)-\frac{5}{3}u\epsilon+\frac{22}{3}u+6\epsilon+4.
\end{eqnarray*}

Since $\frac{10}{3}u-12\geq0$ for $u\geq4$ and $-m\leq-1$ we have 
$$-m(\frac{10}{3}u-12)\leq-1(\frac{10}{3}u-12)=-\frac{10}{3}u+12.$$
Now, for $5\leq u<a$, it follows that
\begin{eqnarray*}
\dim \mathcal{M}_{2m+\epsilon+u^2}&\leq&(4a-2)(\frac{2}{3}u+4)-\frac{10}{3}u+12-\frac{5}{3}u\epsilon+\frac{22}{3}u+6\epsilon+4\\
&<&(4a-2)(\frac{2}{3}a+4)-\frac{10}{3}a+12-\frac{5}{3}a\epsilon+\frac{22}{3}a+6\epsilon+4\\
&=& \frac{8}{3}a^2-\frac{5}{3}a\epsilon+\frac{56}{3}a+6\epsilon+8.
\end{eqnarray*}
The quadratic function
\begin{eqnarray*}
h(a)=\dim\mathcal{V}(\boldsymbol{a},\boldsymbol{b})-(\frac{8}{3}a^2-\frac{5}{3}a\epsilon+\frac{56}{3}a+6\epsilon+8)=\frac{10}{3}a^2-\frac{38}{3}a+\frac{5}{3}a\epsilon-6\epsilon,
\end{eqnarray*}
is positive for $a\geq4$. Therefore, $\dim \mathcal{V}(\boldsymbol{a},\boldsymbol{b})>\mathcal{M}_{2m+\epsilon+u^2}$ for $a\geq4$, as desired.

\end{proof}
\begin{theorem}\label{dim-family11}
Let $a\geq3$ be an integer. There exists a new component $\mathcal{Y}_a\in\mathbf{V}_0$ of the moduli space $\mathcal{B}(0,4a-2)$ distinct 
from the instanton, Ein, and modified Instanton components, whose dimension is at least $6a^2+6a+8$.
\end{theorem}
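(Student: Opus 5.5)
The plan is to take $\mathcal{Y}_a$ to be an irreducible component of $\mathcal{B}(0,4a-2)$ containing the closure of $\mathcal{V}(\boldsymbol{a};\boldsymbol{b})$, with $\boldsymbol{a}=(a^2)$ and $\boldsymbol{b}=((a-1)^2,0^2)$. Then we show that this component cannot coincide with any of the three known types.

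First, $\mathcal{V}(\boldsymbol{a};\boldsymbol{b})$ is irreducible. It is the image of the open subset of the affine variety $\mathcal{P}(\boldsymbol{a};\boldsymbol{b})$ on which the cohomology is stable, and the explicit monad $\mathbf{M_0}$ of \eqref{monad0} shows this subset is nonempty. One may pass to an irreducible component of $\mathcal{P}(\boldsymbol{a};\boldsymbol{b})$ containing $\mathbf{M_0}$ if necessary. Its image in $\mathcal{B}(0,4a-2)$, under the injective morphism $\mathcal{V}(\boldsymbol{a};\boldsymbol{b})\to\mathcal{B}(0,4a-2)$, is therefore an irreducible subset of dimension $6a^2+6a+8$ by \eqref{dim-f}. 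Any irreducible component $\mathcal{Y}_a$ containing it has $\dim\mathcal{Y}_a\geq 6a^2+6a+8$, which already gives the dimension bound.

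Second, we check distinctness. If $\mathcal{Y}_a$ were the instanton component, its dimension would be $8c_2-3=32a-19$. This is strictly smaller than $6a^2+6a+8$ for $a\geq3$, as in Remark \ref{rmk1}. If $\mathcal{Y}_a$ were an Ein component $\mathcal{G}(t;r,s)$, Proposition \ref{prop-dim1} gives $\dim\mathcal{V}>\dim\mathcal{G}$. The small cases $a=3$ (no Ein component exists) and $a=4$ (the Ein component has dimension $117<128$) are handled by the discussion preceding that proposition. If $\mathcal{Y}_a$ were a modified instanton component $\mathcal{M}_{c_2}$, Proposition \ref{prop-dim2} gives the strict inequality for $a\geq4$ and for $u\geq5$. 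When $u\leq4$, or when $a=3$ (so $c_2=10\leq18$), the component $\mathcal{M}_{c_2}$ has expected dimension $8c_2-3$ by \cite[Subsection 4.1]{tikhomirov2019}, and the instanton comparison applies. In every case a proper containment $\mathcal{V}\subset\mathcal{Y}_a$ into a component of smaller dimension is impossible, so $\mathcal{Y}_a$ is new.

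The main obstacle is the boundary case $a=3$, together with making sure Propositions \ref{prop-dim1} and \ref{prop-dim2} cover every admissible $(t,r,s)$ and $(u,m,\epsilon)$. I would first enumerate $c_2=10$ by hand, using the count $\dim\mathcal{V}=80>77$ against the expected dimension. A secondary point is that $\mathcal{P}(\boldsymbol{a};\boldsymbol{b})$ might be reducible. To handle this, I would define $\mathbf{V}_0$ as the set of components $\mathcal{Y}_a$ obtained from the component of $\mathcal{P}$ containing $\mathbf{M_0}$. I would use the homotopy-free dimension count of \cite[formula 33]{MF2021} for that component, so that \eqref{dim-f} applies to it.
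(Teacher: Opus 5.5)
Your proposal is correct and follows the paper's proof. Like the paper, you place $\mathcal{V}(\boldsymbol{a};\boldsymbol{b})$ in some irreducible component $\mathcal{Y}_a$ of $\mathcal{B}(0,4a-2)$ and rule out the instanton, Ein and modified instanton components by the dimension comparisons of Remark~\ref{rmk1} and Propositions~\ref{prop-dim1} and~\ref{prop-dim2}, which also gives $\dim\mathcal{Y}_a\geq 6a^2+6a+8$. Your separate handling of the boundary cases $a=3,4$ (the final inequality in Proposition~\ref{prop-dim2} is only established for $a\geq4$) and of the irreducibility of $\mathcal{V}(\boldsymbol{a};\boldsymbol{b})$ just makes explicit details the paper leaves implicit.
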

\begin{proof}
From Remark \ref{rmk1} together with Propositions \ref{prop-dim1} and \ref{prop-dim2} it follows that 
$\mathcal{V}(\boldsymbol{a};\boldsymbol{b})$, with $\boldsymbol{a}=(a^2)$ and $\boldsymbol{b}=({(a-1)}^2,0^2)$, has 
dimension large than that of the instanton, Ein and modified Instanton components. Hence 
$\mathcal{V}(\boldsymbol{a};\boldsymbol{b})$ must lie contained in a new component $\mathcal{Y}_a$ 
of $\mathcal{B}(0,4a-2)$ and consequently,
$$\dim \mathcal{Y}_a\geq 6a^2+6a+8.$$
\end{proof}

\subsection{A family of rank 2 bundles $\mathcal{V}(\boldsymbol{a};\boldsymbol{b})$ with $\boldsymbol{a}=({(a-1)}^2,1)$ and $\boldsymbol{b}=(a^2)$}\label{family2}
In this subsection, we will consider an integer $a\geq3$ and the minimal Horrocks monad family 
\begin{equation}\label{monad-1}
\mathbf{M}_1: 2\cdot\op3(-a)\stackrel{\alpha}\rightarrow2\cdot\op3(a-1)\oplus2\cdot\op3(1-a)\oplus(\op3(1)\oplus\op3(-1))\stackrel{\beta}\rightarrow2\cdot\op3(a),
\end{equation}
where 
$$\beta=\left(
\begin{array}{cccccc}
x& 0&w^{2a-1}& z^{2a-1}& y^{a-1}& z^{a+1}\\
y& x& z^{2a-1}& w^{2a-1}& x^{a-1}& 0\\
\end{array} \right), 
~~\alpha=\left(
\begin{array}{cc}
w^{2a-1}&0\\
z^{2a-1}&w^{2a-1}+x^{a-2}z^{a+1}\\
-x&0\\
-y&-x\\
0&-z^{a+1}\\
yz^{a-2}&xz^{a-2}+y^{a-1}\\
\end{array}
\right)$$
such that $\alpha$ is injective, $\beta$ is surjective and $\beta\alpha=0$. According to our notation, we set $\mathbf{M}_1:=\mathcal{P}({(a-1)}^2,1;a^2)=\mathcal{P}(\boldsymbol{a};\boldsymbol{b})$, where $\boldsymbol{a}=({(a-1)}^2,1)$ and $\boldsymbol{b}=(a^2)$. From \cite[Lemma 13]{F2024}, if $\E$ is the cohomology of a minimal monad of the family $\mathcal{P}(\boldsymbol{a};\boldsymbol{b})$, then $\E$ is a stable rank 2 bundle with $c_1(\E)=0, c_2(\E)=4a-3$ and its spectrum is given by
\begin{equation}\label{spectrum1}
 \mathcal{X}:=\mathcal{X}(\E)=\{{(1-a)}^2,\cdots,{-1}^2,0,1^2,\cdots,{(a-2)}^2, {(a-1)}^2\}.   
\end{equation}
It was also proved in \cite{F2024} that $\dim\mathcal{V}(\boldsymbol{a};\boldsymbol{b})=6a^2+6a+2$ which is larger 
than the expected dimension of the moduli space $\mathcal{B}(0,4a-3)$. Following the same argument as in Proposition 
\ref{prop-dim2} for the case $c_2=4a-3$ we obtain 
\begin{equation}\label{dim-4}
\dim\mathcal{V}(\boldsymbol{a};\boldsymbol{b})>\dim\mathcal{M}_{c-2} \mbox{ for } a\geq3    
\end{equation}
and consequently, the family $\mathcal{V}(\boldsymbol{a};\boldsymbol{b})$ cannot be contained in the modified 
Instanton component $\mathcal{M}_{c-2}$.\\
\indent The goal of this subsection is to 
show that the family $\mathcal{V}(\boldsymbol{a};\boldsymbol{b})$ is not contained the Ein and modified irreducible components. For this, it remains to verify that 
$\mathcal{V}(\boldsymbol{a};\boldsymbol{b})$ cannot contained in the Ein irreducible components.  
\begin{proposition}\label{prop-dim3}
For $\boldsymbol{a}=(a^2)$ and 
$\boldsymbol{b}=({(a-1)}^2,1)$, the family of stable rank 2 bundles $\mathcal{V}(\boldsymbol{a};\boldsymbol{b})$ 
cannot be contained in any irreducible Ein component $\mathcal{G}(r,s,t)$ of the moduli 
space $\mathcal{B}(0,4a-3)$ for $a\geq3$.
 \end{proposition}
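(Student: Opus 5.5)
The plan mirrors Proposition \ref{prop-dim1}: show that $\dim\mathcal{V}(\boldsymbol{a};\boldsymbol{b})=6a^2+6a+2$ strictly exceeds $\dim\mathcal{G}(r,s,t)$ for every Ein triple with $t^2-r^2-s^2=4a-3$, $t>r+s$. An irreducible family of larger dimension cannot lie in that component.

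\textbf{Parity.} Since $4a-3\equiv 1 \pmod 4$ and squares are $0$ or $1$ mod $4$, exactly one of $t,r,s$ is odd when the sum of squares is taken with signs: we need $t^2-r^2-s^2\equiv1$. This forces $t$ odd and $r,s$ both even. In particular $r=0$ is allowed, so the term $\mu(r,s)\in\{0,1,4\}$ in \eqref{ein-dim} now genuinely varies. I will treat the case $r=0$ (hence $\mu\in\{1,4\}$) separately from the case $0<r\le s$ (where $\mu=0$). In every case I bound $-\mu$ above by $0$, which only enlarges $\dim\mathcal{G}$ and so is harmless.

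\textbf{Reduction to one variable.} Substitute $a=(t^2-r^2-s^2+3)/4$ into $6a^2+6a+2$. This gives a quartic
\[
\tfrac38(t^2-r^2-s^2)^2+\tfrac{15}{4}(t^2-r^2-s^2)+\text{const}.
\]
Expanding \eqref{ein-dim} gives the same cubic expression for $\dim\mathcal{G}(r,s,t)$ as in the proof of Proposition \ref{prop-dim1}, up to the $\mu$ correction. Set $h(r,s,t)=\dim\mathcal{V}-\dim\mathcal{G}$ and fix $s_0\ge r_0\ge0$.

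\textbf{Monotonicity in $t$.} As before, $h_{tt}=\tfrac32 c_2+3t^2-4t+(\text{const})$, modulo small changes in the linear terms. This is positive for $t\ge r_0+s_0+1\ge 1$ once $c_2\ge 9$, because the minimal admissible $t$ is now $r_0+s_0+1$ by parity. I then evaluate $h_t$ and $h$ at $t=r_0+s_0+1$. Both are polynomials in $r_0,s_0$ whose top-degree terms, of the shape $r_0^2s_0^2$ and $r_0s_0^2$, have positive coefficients. Grouping as in Proposition \ref{prop-dim1}, for instance $r_0^2(4s_0-\tfrac13 r_0)$, should show positivity whenever $r_0,s_0>0$.

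\textbf{Main obstacle: the boundary case $r_0=0$.} Here the quartic terms in $r_0$ disappear, and $h(0,s_0,s_0+1)$ is only cubic in $s_0$ with possibly negative constant terms. The small values $s_0=0,2,4$ must then be checked by hand, using the actual $\mu$.
\begin{itemize}
\item For $r=s=0$, $t$ odd and $c_2=t^2$, so $a=(t^2+3)/4$. This gives $a=3,7,\dots$, and the comparison reduces to a single-variable polynomial inequality in $t$, checked directly at $t=3$: $\dim\mathcal{V}=74$, $\dim\mathcal{G}(0,0,3)=37$.
\item For $r=0$, $s\ge2$, the lowest case $t=s+1$ gives $c_2=2s+1$. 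This is small relative to the quartic growth of $\dim\mathcal{V}$, so it should follow once the cubic is checked at $s_0=2$, say directly by plugging into \eqref{ein-dim}.
\end{itemize}
These finitely many checks, together with the monotonicity argument, conclude the proof.
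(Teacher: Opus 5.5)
Your parity reduction ($t$ odd, $r,s$ even) is correct and cleaner than the paper's case split. The gap is the overall strategy: you try to show $\dim\mathcal{V}(\boldsymbol{a};\boldsymbol{b})=6a^2+6a+2$ exceeds $\dim\mathcal{G}(r,s,t)$ for every admissible triple, and this is false for $(r,s,t)=(0,2a-2,2a-1)$. That triple satisfies $t>r+s$ and $t^2-r^2-s^2=4a-3$ for every $a\ge 3$. From \eqref{ein-dim} with $\mu(0,s)=1$ you get $\dim\mathcal{G}(0,s,s+1)=3s^2+10s+8$, hence $\dim\mathcal{G}(0,2a-2,2a-1)=12a^2-4a$. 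This exceeds $6a^2+6a+2$ by $6a^2-10a-2>0$; already for $a=3$ the Ein component $\mathcal{G}(0,4,5)$ has dimension $96$ against $74$.

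Your heuristic for this case runs the wrong way. Along $t=s+1$ you have $c_2=2s+1$, linear in $s$, so $\tfrac38 c_2^2+\tfrac{15}{4}c_2+\dots$ grows only like $\tfrac32 s^2$, while the Ein dimension grows like $3s^2$. Moreover, $s_0=2$ corresponds to $a=2$, outside the range, so a check there proves nothing. For $r_0=0$, your monotonicity argument would also start from the negative value $h(0,s_0,s_0+1)$.

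For this triple you need an argument that is not a dimension count; the paper uses semicontinuity of cohomology. A generic $\mathcal{F}\in\mathcal{G}(0,s,s+1)$ has $H^1_*(\mathcal{F})\cong (S/(f_1,\dots,f_4))(t)$, so $h^1(\mathcal{F}(-t))=1$. By upper semicontinuity, $h^1(\mathcal{E}(-t))\ge 1$ for every $\mathcal{E}$ in that component. On the other hand, for $\mathcal{E}'\in\mathcal{V}(\boldsymbol{a};\boldsymbol{b})$ the module $H^1_*(\mathcal{E}')$ is a quotient of $H^0_*(2\cdot\mathcal{O}_{\mathbb{P}^3}(a))$. Hence $h^1(\mathcal{E}'(-l))=0$ for $l>a$, and here $t=2a-1>a$.

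Once this triple is handled separately, your dimension comparison does work for the remaining cases:
\begin{itemize}
\item $r,s\ge 2$ with $t=r+s+1$, where $\dim\mathcal{V}$ is quartic in $r,s$ and $\dim\mathcal{G}$ is cubic;
\item $t\ge r+s+3$, for instance $h(0,s,s+3)=\tfrac92 s^2+21s+1+\mu>0$.
\end{itemize}
For $r_0=0$ you must start the monotonicity in $t$ at $t=s_0+3$.

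Two minor slips:
\begin{itemize}
\item $\dim\mathcal{G}(0,0,3)=4\binom{6}{3}-4-3-4=69$, not $37$, so the margin there is only $5$.
\item $\mu(r,r)=1$ for $r>0$; this is harmless since you discard $-\mu$.
\end{itemize}
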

\begin{proof}
The case $a=3$ was showed in \cite[Theorem 14]{F2024}. Let $\mathcal{G}(r,s,t)$ be an Ein irreducible component 
associated to a triple of integers $r,s, t$ such that $t>r+s$ and $c_2=t^2-r^2-s^2=4a-3$. We will 
separeted the proof in two cases: $t=r+s+1$ and $t\geq r+s+2$.\\
\textbf{Case 1:} For \( t = r + s + 1 \), the following cases arise:
\begin{enumerate}
    \item[a)] \((r,s,t) = (0,s,s+1)\);
    \item[b)] \((r,s,t) = (1,s,s+2)\);
    \item[c)] \((r,s,t) = (r,s,r+s+1)\) with \( r > 1 \).
\end{enumerate}
We can observe that \( a = \frac{1}{2}(rs + r + s + 2) \). Moreover, from Equation \eqref{ein-dim}
\[
\dim \mathcal{G}(r,s,r+s+1) = -\frac{1}{3}r^{3} + 2r^{2}s + 3rs^{2} + 3r^{2} + 12rs + 3s^{2} + \frac{41}{3}r + 10s + 9 - \mu(r,s),
\]
where \(\mu(r,s) \in\{0,1,4\}\), and
\[
\dim \mathcal{V}(\boldsymbol{a};\boldsymbol{b}) = \frac{3}{2}r^{2}s^{2} + \frac{3}{2}r^{2} + \frac{3}{2}s^{2} + 3r^{2}s + 3rs^{2} + 12rs + 9r + 9s + 14.
\]
Consequently,
\begin{eqnarray*}
h(r,s) & := & \dim \mathcal{V}(\boldsymbol{a};\boldsymbol{b}) - \dim \mathcal{G}(r,s,r+s+1) \\
& = & \frac{3}{2}r^{2}s^{2} - \frac{1}{3}r^{3} + r^{2}s - \frac{3}{2}r^{2} - \frac{3}{2}s^{2} - \frac{14}{3}r - s + 5 + \mu(r,s) .
\end{eqnarray*}
For the case \((r,s,t) = (1,s,s+2)\) it follows that $a = \frac{1}{2}(2s+3)$, which is not an integer.
Now, we assume that $(r,s,t)=(0,s,s+1)$. Then
\[
h(0,s) = -\frac{3}{2}s^{2} - s + 5 + \mu(0,s),
\]
which is negative for all sufficiently large \( s \). Observe that the identity \((s+1)^2 - s^2 = 4a - 3\) yields \( s = 2a - 2 \). Since \( a > 2 \), it follows that \( t = s + 1 > s = 2a - 2 > a \).  
Now, if \( \mathcal{F} \) denotes a generic point of \( \mathcal{G}(0,s,s+1) \), then \( h^1(\mathcal{F}(-s-1)) = 1 \). By 
lower semicontinuity, \( h^1(\mathcal{E}(-s-1)) \geq 1 \) for every 
\( \mathcal{E} \in \mathcal{G}(0,s,s+1) \). However, for any \( \mathcal{E}' \in \mathcal{V}(\boldsymbol{a};\boldsymbol{b}) \) we have \( h^1(\mathcal{E}'(-s-1)) = 0 \). Hence, 
\( \mathcal{V}(\boldsymbol{a};\boldsymbol{b}) \) cannot be contained 
in \( \mathcal{G}(0,s,s+1) \).

Finally, consider \( r > 1 \). We may write
\[
h(r,s) = (r^2 - 1)\left( \frac{3}{2}s^{2} - \frac{1}{3}r + s - \frac{3}{2} \right) - 5r + \frac{7}{2} + \mu(r,s) .
\]
Define \( g(r) = \dfrac{-5r + \frac{7}{2}}{r^2 - 1} \). we verify that \( |g(r)| < 1 \) for all \( r > 4 \), and moreover
\[
g(2) = -\frac{13}{6}, \quad g(3) = -\frac{23}{16}, \quad g(4) = -\frac{11}{10}.
\]

Hence,
\[
\begin{aligned}
h(r,s) & = (r^2 - 1)\left( \frac{3}{2}s^{2} - \frac{1}{3}r + s - \frac{3}{2} + g(r) \right) + \mu(r,s) \\
& \geq (r^2 - 1)\left( \frac{3}{2}s^{2} - \frac{1}{3}r + s - \frac{3}{2} - \frac{13}{6} \right) + \mu(r,s) \\
& = (r^2 - 1)\left( \frac{3}{2}s^{2} + s - \frac{1}{3}r - \frac{11}{3} \right) + \mu(r,s) .
\end{aligned}
\]
Since \( s \geq r > 1 \), it follows that \( r^2 > 1 \), \( s > \frac{1}{3}r \), and \( \frac{3}{2}s^2 > \frac{11}{3} \). Therefore, 
the expression inside the parentheses is positive, which implies \( h(r,s) > 0 \).

\textbf{Case 2:} Consider \( t \geq r + s + 2 \).

Define \( h(r,s,t) := \dim \mathcal{V}(\boldsymbol{a};\boldsymbol{b}) - \dim \mathcal{G}(r,s,t) \).  
First, assume \( r = s = 0 \). Then
\[
h(0,0,t) = \frac{3}{8}t^4 - \frac{2}{3}t^3 - \frac{1}{4}t^2 - \frac{22}{3}t + \frac{135}{8}.
\]
The polynomial \( h(0,0,t) \) does not admit real roots; consequently, \( h(0,0,t) > 0 \) for all  
\( t>0 \). Finally, we suppose \( s > 0 \). Since \( t \geq r + s + 2 \),  we can adapt the method used 
in the proof of Proposition \ref{prop-dim1} to establish the positivity \( h(r,s,t) > 0 \).
\end{proof}

\begin{corollary}\label{family-2}
Let $a\geq3$ be an integer, $\boldsymbol{a}=(a^2)$ and 
$\boldsymbol{b}=({(a-1)}^2,1)$. There exists a new component $\mathcal{Z}_a\in\mathbf{V}_1$ of the moduli space 
$\mathcal{B}(0,4a-3)$ distinct 
from the instanton, Ein, and modified Instanton components, whose dimension is at least $6a^2+6a+2$.
\end{corollary}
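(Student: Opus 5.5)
The plan mirrors the proof of Theorem \ref{dim-family11}, now for $c_2=4a-3$. First I would recall from \cite[Lemma 13]{F2024} that every cohomology of a monad in $\mathbf{M}_1=\mathcal{P}(\boldsymbol{a};\boldsymbol{b})$ is a stable rank 2 bundle with $c_1=0$ and $c_2=4a-3$. Hence the injective morphism $\mathcal{V}(\boldsymbol{a};\boldsymbol{b})\to\mathcal{B}(0,4a-3)$ from Subsection \ref{sub-monads} realises $\mathcal{V}(\boldsymbol{a};\boldsymbol{b})$ as a subset of the moduli space. This subset is irreducible: $\mathcal{P}(\boldsymbol{a};\boldsymbol{b})$ is taken as the irreducible family of monads containing the displayed $\alpha,\beta$, and the image of an irreducible set is irreducible. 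By \cite{F2024} it has dimension $6a^2+6a+2$. An irreducible subset lies in some irreducible component, so I would fix a component $\mathcal{Z}_a$ containing $\mathcal{V}(\boldsymbol{a};\boldsymbol{b})$. Then $\dim\mathcal{Z}_a\geq 6a^2+6a+2$ is automatic, and it remains to show that $\mathcal{Z}_a$ is none of the three known types.

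\textbf{Instanton component.} We have $\dim I_{4a-3}=8(4a-3)-3=32a-27$. The inequality $6a^2+6a+2>32a-27$ is equivalent to $6a^2-26a+29>0$. This quadratic has negative discriminant, $676-696<0$, so the inequality holds for every $a$. Hence $\mathcal{V}(\boldsymbol{a};\boldsymbol{b})\not\subset I_{4a-3}$.

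\textbf{Modified Instanton component.} This is inequality \eqref{dim-4}. I would justify it by repeating the argument of Proposition \ref{prop-dim2} with $4a-3$ in place of $4a-2$. For $u\geq5$ one shows $u<a$ as before, and then bounds $\dim\mathcal{M}_{2m+\epsilon+u^2}$ by a quadratic in $a$ with leading coefficient $\frac{8}{3}$. Comparing with $6a^2+6a+2$ gives the claim for $a\geq4$. The small cases, $a=3$ and the cases $u\leq4$, are handled as in Remark \ref{rmk1}: there $\dim\mathcal{M}_{c_2}$ equals the expected dimension by \cite[Subsection 4.1]{tikhomirov2019}, which we have just shown is too small.

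\textbf{Ein components.} This is exactly Proposition \ref{prop-dim3}. In most cases it is a strict dimension inequality. In the case $\mathcal{G}(0,s,s+1)$ it is instead a semicontinuity obstruction on $h^1(\mathcal{E}(-s-1))$.

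Combining the three cases, $\mathcal{Z}_a$ is different from the instanton, Ein and modified Instanton components. For varying $a\geq3$ these components form the family $\mathbf{V}_1$.

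The main obstacle is the non-containment argument, and I would make it explicit. If $\mathcal{V}(\boldsymbol{a};\boldsymbol{b})\subset X$ for a known component $X$ with $\dim X<\dim\mathcal{V}(\boldsymbol{a};\boldsymbol{b})$, we get an immediate contradiction, so all the strict inequalities suffice. In the exceptional Ein case $(0,s,s+1)$, the dimension comparison goes the wrong way. There one relies on the semicontinuity statement already proved inside Proposition \ref{prop-dim3}. I would double-check that its direction is correct: the condition $h^1(\cdot(-s-1))\geq1$ is closed and holds on a dense open subset of $\mathcal{G}(0,s,s+1)$, hence on all of it, while it fails on $\mathcal{V}(\boldsymbol{a};\boldsymbol{b})$ because $s+1=2a-1>a$ and the monad display gives vanishing there.
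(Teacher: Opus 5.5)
Your proposal is correct and follows the paper's own argument. Like the paper, you place $\mathcal{V}(\boldsymbol{a};\boldsymbol{b})$ in some component and rule out the instanton and modified instanton components by the dimension comparison \eqref{dim-4}, and the Ein components by Proposition \ref{prop-dim3}; your additions (the explicit check $6a^2-26a+29>0$ via its negative discriminant, and confirming that $h^1(\cdot(-s-1))\geq 1$ is a closed condition, i.e.\ upper rather than ``lower'' semicontinuity) just make explicit steps the paper leaves implicit.
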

\begin{proof}
From Equation \eqref{dim-4}, it follows that 
$\mathcal{V}(\boldsymbol{a};\boldsymbol{b})$, with $\boldsymbol{a}=(a^2)$ and $\boldsymbol{b}=({(a-1)}^2,1^2)$, has 
dimension larger than that of the instanton and modified Instanton components, and from Proposition \ref{prop-dim3}, cannot be contained in any irreducible Ein component. Hence 
$\mathcal{V}(\boldsymbol{a};\boldsymbol{b})$ must lie contained in a new component $\mathcal{Y}_a$ 
of $\mathcal{B}(0,4a-3)$ and consequently,
$$\dim \mathcal{Y}_a\geq 6a^2+6a+2.$$
\end{proof}
\begin{theorem}
Under the hypothesis of Corollary \ref{family-2} and Proposition \ref{prop-dim3}, the new component $\mathcal{Z}_a$ of 
$\mathcal{B}(0,4a-3)$ contains singular points.
\end{theorem}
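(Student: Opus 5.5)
We will show that a general point $[\E]$ of the family $\mathcal{V}(\boldsymbol{a};\boldsymbol{b})\subset\mathcal{Z}_a$ satisfies $\dim\Ext^1(\E,\E)>\dim\mathcal{Z}_a$. Since $\dim T_{[\E]}\mathcal{B}(0,c_2)=\dim\Ext^1(\E,\E)$, such a point is singular on $\mathcal{Z}_a$, or at least on $\mathcal{B}(0,4a-3)$. Equivalently, the goal is to prove $h^2(\E\otimes\E)\neq 0$, using the identity $\dim\Ext^1(\E,\E)-\dim\Ext^2(\E,\E)=8c_2-3$ from Proposition 4.2 of \cite{H78}.

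\textbf{Step 1: reduce to a nonvanishing of $h^1$.} Since $\E\cong\E^\vee$, Serre duality gives $\Ext^2(\E,\E)\cong H^2(\E\otimes\E)\cong H^1(\E\otimes\E(-4))^\vee$. Tensor the display of the monad $\mathbf{M}_1$ with $\E(-4)$. Put $K=\ker\beta$, so that
\[
0\to K\to\cB\to\cA\to 0,\qquad 0\to\cA^\vee\to K\to\E\to0,
\]
where $\cA=2\cdot\op3(a)$ and $\cB=2\cdot\op3(a-1)\oplus2\cdot\op3(1-a)\oplus\op3(1)\oplus\op3(-1)$. The long exact sequences then express $H^1(\E\otimes\E(-4))$ in terms of groups of the form $H^i(\E(k))$ with explicit twists $k$. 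The spectrum \eqref{spectrum1} controls $h^1(\E(k))$ for $k<0$ through \cite[Property S4]{HR91}, and it shows that the relevant $H^2$ groups vanish. The map $\alpha$ gives the minimal presentation of $H^1_*(\E)$ described in Subsection \ref{sub-monads}. This should reduce $h^2(\E\otimes\E)\neq0$ to a concrete statement: a map of the form $H^1(\E(k))^{\oplus2}\to H^1(\E(k'))^{\oplus2}$, induced by the entries of $\alpha$, is not surjective (or not injective). The terms most likely to carry the contribution are the summands $\op3(\pm1)$ and the degree $2a-1$ entries.

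\textbf{Step 2: the explicit monad and the low case.} For $a=3$ we have $c_2=9$, and Example \ref{ex1} is exactly the monad $\mathbf{M}_1$ with $a=3$. There the Macaulay2 code gives $\dim\Ext^1(\E,\E)=79>8\cdot9-3=69$. Since $\dim\mathcal{V}=6\cdot9+18+2=74$, we still need $\dim\mathcal{Z}_3<79$. I would try to obtain this from the fact that $\mathcal{V}(\boldsymbol{a};\boldsymbol{b})$ is dense in $\mathcal{Z}_a$. That density would follow from upper semicontinuity of the monad type: $h^1(\E(k))$ and the graded Betti numbers of $H^1_*$ are semicontinuous, so a generic deformation of $\E$ keeps these numbers or decreases them. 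Decreasing them would force a different type with larger fixed $c_2$ range. If this holds, $\dim\mathcal{Z}_a=\dim\mathcal{V}=6a^2+6a+2$.

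\textbf{Step 3: general $a$.} The main task is then to show $\dim\Ext^1(\E,\E)>6a^2+6a+2$ for every $a\geq3$. This is equivalent to
\[
h^2(\E\otimes\E)>6a^2+6a+2-(32a-27)=6a^2-26a+29 .
\]
Alternatively, it suffices to exhibit a single point of $\mathcal{Z}_a$ where $h^2$ exceeds its generic value. Semicontinuity of $h^2$ on the family is then enough: a point where $\dim T$ jumps lies on a component, and if the jump exceeds the component dimension, the point is singular.

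I expect the main obstacle to be the general-$a$ computation of $h^2(\E\otimes\E)$ from the monad, which requires tracking the connecting maps. My first attempt would be the cruder route: show that the Step 1 sequence gives a lower bound on $h^1(\E\otimes\E(-4))$ by a count of dimensions of $H^1(\E(k))$ pieces. These dimensions are known from the spectrum, with values $(a-l)(a-l+1)$-type quadratics. A quadratic lower bound beating $6a^2-26a+29$ would finish the proof. If that count falls short, I would fall back on the dense-family argument together with the $a=3$ Macaulay2 verification, and prove the general case by the same code run for several $a$ plus the pattern of the spectrum.
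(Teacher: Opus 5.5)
Your main line has a genuine gap in Step 2, and it also aims at the wrong target.

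\textbf{Step 2 lacks an upper bound.} To deduce singularity from $\dim\Ext^1(\E,\E)>6a^2+6a+2$ you need the \emph{upper} bound $\dim\mathcal{Z}_a\leq\dim\mathcal{V}(\boldsymbol{a};\boldsymbol{b})$. The paper only has the lower bound $\dim\mathcal{Z}_a\geq 6a^2+6a+2$, and your density argument does not supply the upper one. Semicontinuity only says that $h^1(\E(k))$ and the Betti numbers of $H^1_*(\E)$ can drop under generization. So the general point of $\mathcal{Z}_a$ may well have a smaller spectrum and a different monad type, and nothing you say rules this out.

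\textbf{The target is wrong.} ``A general point of $\mathcal{V}$ is singular'' is not what you should try to prove. By upper semicontinuity of $h^1(\E\otimes\E)$, a general point of $\mathcal{V}$ realizes the \emph{minimal} tangent dimension on $\mathcal{V}$. The bundle of Example \ref{ex1} is not general: the paper exhibits another monad $(\tilde\alpha,\tilde\beta)$ in the same family $\mathcal{P}(\boldsymbol{a};\boldsymbol{b})$ with $\dim\Ext^1(\tilde\E,\tilde\E)=76<79$ for $a=3$. If $\mathcal{V}$ were dense in $\mathcal{Z}_a$ as you hope, your claim would say that $\mathcal{B}(0,4a-3)$ is generically non-reduced along $\mathcal{Z}_a$. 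That is far stronger than the statement and unsupported. Likewise your ``crude route'' via dimension counts of $H^1(\E(k))$ cannot work as intended. Those numbers are fixed by the monad type, so they are the same at every point of $\mathcal{V}$, and any bound obtained from them only bounds the generic, minimal value.

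\textbf{The missing idea.} It is the one you only gesture at in your ``Alternatively'' remark, and it makes $\dim\mathcal{Z}_a$ irrelevant. Suppose $\E,\tilde\E\in\mathcal{V}\subset\mathcal{Z}_a$ satisfy $\dim\Ext^1(\E,\E)>\dim\Ext^1(\tilde\E,\tilde\E)$. Then $\dim T_{[\E]}\mathcal{B}(0,4a-3)>\dim\Ext^1(\tilde\E,\tilde\E)\geq\dim\mathcal{Z}_a$. Either $[\E]$ lies on a second component, hence is singular, or its local dimension is $\dim\mathcal{Z}_a<\dim T_{[\E]}$, hence it is singular. Any jump suffices; your condition ``if the jump exceeds the component dimension'' misstates this. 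The real content is producing the second, less special point, which your proposal never does. That is exactly the paper's proof: the explicit pair $(\tilde\alpha,\tilde\beta)$ alongside $(\alpha,\beta)$, with the Macaulay2 computation giving $(\tilde e,e)=(76,79)$ for $a=3$ and $e-\tilde e=4$ for the larger values of $a$ tested.
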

\begin{proof}
In addition to the maps $\alpha$ and $\beta$ defined at the beginning of this subsection \ref{family2}, consider
$$\tilde{\alpha}= \begin{pmatrix}
y^{2a-1} & z^{2a-1} \\[4pt]
z^{2a-1} + y^{a-1} w^{a} & y^{2a-1} \\[4pt]
-y^{a} w - x w^{a} & -x y^{a} \\[4pt]
-y^{a-1} & 0 \\[4pt]
-x & -w \\[4pt]
-w & -x
\end{pmatrix}\ \ \ \ \mbox{ and } \ \ \ \ \tilde{\beta} = \begin{pmatrix}
w & x & y^{a-1} & 0 & z^{2a-1} & 0 \\
x & w & 0 & w^{a+1} & y^{2a-1} & z^{2a-1}
\end{pmatrix}$$
where $\tilde{\alpha}$ is injective, $\tilde{\beta}$ is surjective and $\tilde{\beta}\tilde{\alpha}=0$. This yields other minimal 
monad in $\mathcal{P}(\boldsymbol{a};\boldsymbol{b})$ whose cohomology $\tilde{\mathcal{E}}:=\ker\tilde{\beta}/\im\tilde{\alpha}$ 
is a stable rank 2 bundle on $\mathbb{P}^3$ belonging to 
$\mathcal{V}(\boldsymbol{a};\boldsymbol{b})$. We also set $\mathcal{E}:=\ker\beta/\im\alpha$ and denote 
$$\tilde{e} := \dim \operatorname{Ext}^1(\tilde{\mathcal{E}}, \tilde{\mathcal{E}}) \mbox{ and } e := \dim \operatorname{Ext}^1(\mathcal{E}, \mathcal{E}).$$
Using the code from Section \ref{section-ext} and comparing with Example \ref{ex1} for $a=3$, we obtain $(\tilde{e}, e) = (76, 79)$, while for $a > 3$ we find $e - \tilde{e} = 4$. Moreover, we observe that 
$\dim \mathcal{V}(a,b) + 2a = e$ holds for any $a > 3$. This can be verified using the code below.

 \begin{lstlisting}[language=Macaulay2]
a=3--any a>2
S = QQ[x,y,z,w], X=Proj S;
H = S^{a-1}^2 ++ S^{1}++S^{-1} ++ S^{1-a}^2;
L1= S^{-a}^2, L2= dual L1;
A1=map(H,L1,matrix {
{y^(2*a-1), z^(2*a-1)},
{z^(2*a-1) + y^(a-1)*w^a, y^(2*a-1)},
{-y^a*w - x*w^a, -x*y^a},{-y^(a-1), 0},{-x, -w},{-w, -x}})
B1=map(L2,H,matrix {
{w, x, y^(a-1), 0, z^(2*a-1), 0},
{x, w, 0, w^(a+1), y^(2*a-1), z^(2*a-1)}})

A2=map(H, L1,{
{w^(2*a-1), 0},
{z^(2*a-1), w^(2*a-1) + x^(a-2)*z^(a+1)},
{0, -z^(a+1)},
{y*z^(a-2), x*z^(a-2) + y^(a-1)},{-x, 0},{-y, -x}})
B2 = map(L2, H, {
{x, 0,  y^(a-1), z^(a+1),w^(2*a-1), z^(2*a-1)},
{y, x, x^(a-1), 0, z^(2*a-1), w^(2*a-1)}})

E1=(ker sheaf B1)/(image sheaf A1)
HH^0 E1--Stability condition
V1=E1**E1; HH^1 V1--tangent

E2=(ker sheaf B2)/(image sheaf A2)
HH^0 E2--Stability condition
V2=E2**E2; HH^1 V2--tangent
--dim
N1=Ext^1(module(V1),S)
N2=Ext^1(module(V2),S)
hilbertFunction(0,N1)
hilbertFunction(0,N2)
6*a^2+8*a+2
\end{lstlisting}
\end{proof}

\subsection{Generalizations}
We have already shown the existence of two families with $c_2=4a-2-b$, where $b\in\{0,1\}$. Moreover, these families belong to a new irreducible component of $\mathcal{B}(0,c_2)$ for every $a>2$. In particular, the smallest Chern class for which they define a new component is $c_2=9$. We observe that case $8(4)-i)$ in \cite[Table 5.3]{HR91} shares similarities with the two families we constructed; this motivates the following conjecture.
\begin{conjecture}\label{conjecture}
 Let \(a\) and \(k\) be positive integers. For each integer \(b\in\{0,1,\dots,k-1\}\) with \(a>b\), there exists a stable rank-2 vector bundle \(\mathcal{E}\) arising as the cohomology of a minimal monad of the form
\begin{equation}\label{general-monad}
    \cM(a, b,k) : 0 \lra {\mathcal{O}_{\mathbb{P}^{3}}(-a)}^{\oplus k} \stackrel{\alpha}{\lra} \begin{matrix} {\mathcal{O}_{\mathbb{P}^{3}}(a-1)}^{\oplus k} \oplus \mathcal{O}_{\mathbb{P}^{3}}(b) \\ \displaystyle \oplus \\ {\mathcal{O}_{\mathbb{P}^{3}}(1-a)}^{\oplus k}\oplus\mathcal{O}_{\mathbb{P}^{3}}(-b) 
\end{matrix} \stackrel{\beta}{\lra} {\mathcal{O}_{\mathbb{P}^{3}}(a)}^{\oplus k} \lra 0.
\end{equation}
Moreover, for all sufficiently large integers \(a \gg 0\), the corresponding family of such bundles belongs to an irreducible component of  \(\mathcal{B}(0,k(2a-1)-b^2)\), whose dimension exceeds the expected dimension. 
\end{conjecture}

We observe that for $k=1$ the Conjecture \ref{conjecture} reduces to a case of Ein's monads. We have seen 
in Subsections \ref{family1} and \ref{family2} that for \(k=2\), the values \(b=0\) or \(b=1\) satisfy this 
condition for all \(a>2\). More precisely, we have shown that for 
\(b \in \{0,1\}\), the family of minimal monads \(\cM(a,b,2)\) yields stable rank 2 bundles with spectrum 
\(\{0^{2-b}, 1^2, 2^2, \dots, (a-1)^{2}\}\). Moreover, we proved in Theorem \ref{dim-family11} and Corollary 
\ref{family-2} that if \(a>2\), then this family of bundles lies in a new component 
of \(\mathcal{B}(0,4a-2-b)\), distinct from the 
Instanton, Ein and modified Instanton components.

In sequence, we study some particular cases to $k=3$ and $k=4$. For $k=3$, if we apply \cite[Lemma 4.8]{HR91} to 
the monads $\mathbf{M}_0$ of \eqref{monad0} and $\mathbf{M}_1$ of \eqref{monad-1} considering $r=a$ and a complete intersection curve $X$ of 
degree $(2a-1,1)$ on $\mathbb{P}^3$, there are stable bundles $\E_0$ and $\E_1$, respectively, such that 
$c_1(\E_0)=c_1(\E_1)=0,c_2(\E_0)=6a-2$ and $c_2(\E_1)=6a-3$. Moreover, $\mathcal{X}(\E_b)=\{0^{3-b}, 1^3, 2^3, \dots, (a-1)^{3}\}\) and the 
stable bundle $\E_b$ is given as 
cohomology of a minimal monad of the form $\cM(a, b,3)$ with $b=0,1$. However, this approach does not cover all possibilities; the case $b=2$ still requires evaluation.

\textbf{Case $k=3$}. Let \[\alpha=
\begin{pmatrix}
  0 & x^{\eta} - w^{\eta} & 0 \\
  x^{\eta} - z^{\eta} & y w^{\eta-1} - w^{\eta} & -z^{\eta} + w^{\eta} \\
  0 & w^{\eta} & x^{\eta} - w^{\eta} \\
  x^{a+b} & 0 & 0 \\
  -y^{a-b} + z^{a-b} & 0 & 0 \\
  w & z & y + w \\
  -w & -y & -w \\
  0 & -z & -y
\end{pmatrix}
,\] and \[\beta=\begin{pmatrix}
  z & 0 & y & 0 & 0 & w^{\eta} & w^{\eta} & x^{\eta} \\
  y & w & w & 0 & 0 & z^{\eta} & x^{\eta} & z^{\eta} \\
  -z & -w & -y-w & y^{a-b} - z^{a-b} & x^{a+b} & x^{\eta} - z^{\eta} - 2w^{\eta} & -2w^{\eta} & -z^{\eta} - w^{\eta}
\end{pmatrix}\]
be the maps of monad $\cM(a,b,3)$, with $\eta=2a-1$. 
A direct computation shows that $\beta \circ \alpha = 0$. Moreover, the map $\beta$ admits no syzygies of degree zero, and one verifies that both $\alpha$ and $\beta$ have maximal rank at every point of $\mathbb{P}^3$; in particular, $\alpha_p$  is injective ($\beta_p$ is surjective) for all $p \in \mathbb{P}^3$. Since the matrices are matrices of homogeneous forms, and they
contain no non-zero scalar entries, then  $\alpha$ and $\beta$ are both minimal. This means that $\E=\ker \beta/\rm im \ \alpha$ is a stable rank $2$ bundle.  

Denote by $\mathcal{V}(a,b,3):=\mathcal{V}({a}^k;({a-1})^k,{b})$ the corresponding family of stable bundles of $\cM(a,b,3)$.
Applying formula (33) of \cite{MF2021} yields, for all $a>3$:
\begin{itemize}
     \item$\dim \mathcal{V}(a,2,3)=9a^{2}+6a-3$;
     \item$\dim \mathcal{V}(a,1,3)=9a^2+6a+13$; 
     \item$\dim \mathcal{V}(a,0,3)=9a^{2}+6a+18$.
\end{itemize}
 For all \(a>3\) and \(b\in\{0,1,2\}\), we obtain the inequality  $\dim\mathcal{V}(a,b,3) > 8c_2-3$,
where \(c_2 = 6a - 3 - b^2\). This verifies Conjecture~\ref{conjecture} for \(k=3\). Furthermore, by applying the 
methods of Sections~4.1 and~4.2, we conclude that this family of bundles belongs to a new irreducible 
component of \(\mathcal{B}(0,c_2)\), distinct from the three known components (instanton, Ein, or modified instanton). In contrast, the dimensions of $\mathcal{V}(2,b,3)$ are smaller than expected. We need to analyze $a=3$.
 
 For $a=3$ and $b=0$, we have $\dim\mathcal{V}(3,0,3)=117=8c_2-3$, where $c_2=15$.
Let $\E\in\mathcal{V}(3,0,3)$ be the bundle defined by the maps $\alpha$ and $\beta$.
Using the algorithm described in Section~\ref{section-ext}, we compute $h^{1}(\E(-2))=9$; hence $\E$ has odd Atiyah--Rees invariant and therefore cannot belong to the instanton component. For $c_2=15$, the only possible Ein monads correspond to the triples $(r,s,t)\in\{(0,1,4),\;(0,7,8),\;(1,3,5)\}$,
so in every case $t>3$. By semicontinuity, $\E$ cannot belong to an Ein component. Moreover, there are no modified instantons with $c_2=15$. Consequently, $\mathcal{V}(3,0,3)$ lies in a new irreducible component of the moduli space. 

A similar analysis applies to $\mathcal{V}(3,1,3)$. It cannot belong to any of the three known components. 
Furthermore, it cannot lie in the same component as $\mathcal{V}(4,0,2)$ because these two families have distinct Atiyah--Rees invariants. Thus, $\mathcal{V}(3,1,3)$ also lies in a new component.  We list the special case $a=3$ in Table \ref{table:2}.
    \begin{table}[h]
 \centering
\begin{tabular}{|l|c|c|c|c|}
\hline
$c_2$ & $\cM(a,b,3)$ &  \mbox{ expected dimension } & $\dim \mathcal{V}(a,b,3)$ & $\chi(\E)$\\ \hline
         15    & $(3,0,3)$     &  $117$    &      $117$&$\{0^3,1^3,2^3\}$  \\ \hline
    14   &$(3,1,3)$ &       $109$     &   $112$  &$\{0^2,1^3,2^3\}$\\ \hline
      11   & $(3,2,3)$    &   $85$         &      93   & $\{0,1^2,2^3\}$ \\ \hline
\end{tabular}
\caption{Dimension of $\mathcal{V}(a,b,3)$ for $a=3$.}
\label{table:2}
\end{table}

\textbf{Case $k=4$}. For $a=3$, the dimensions of $\mathcal{V}(a,b,4)$ corresponding to $b=0,1,2$ are $151$, $147$, and $130$, respectively. This indicates that the dimension exceeds the expected dimension only when $b=2$. Moreover, we have $\dim\mathcal{V}(4,0,4)=239$, $\dim\mathcal{V}(4,1,4)=235$, $\dim\mathcal{V}(4,2,4)=222$, and $\dim\mathcal{V}(4,3,4)=189$; hence, for all $b\in\{0,1,2,3\}$, the inequality $\dim\mathcal{V}(3,b,4) > 8c_2-3$ holds. For $a>4$, the dimension of $\mathcal{V}(a,b,4)$ is larger than expected and is given by:
\begin{itemize}
    \item $\dim \mathcal{V}(a,0,4)=12a^{2}+4a+31$,
    \item $\dim \mathcal{V}(a,1,4)=12a^{2}+4a+27$,
    \item $\dim \mathcal{V}(a,2,4)=12a^{2}+4a+14$,
    \item $\dim \mathcal{V}(a,3,4)=12a^{2}+4a-15$.
\end{itemize}
Consequently, Conjecture~\ref{conjecture} is verified for $k=4$. We now present the explicit matrices $B$ and $A$ representing the maps $\beta$ and $\alpha$, respectively.
$$ B=\begin{pmatrix}
   z & w & 0 & 0 & 0 & 0 & 0 & 0 & x^{2a-1} & y^{2a-1} \\
    0 & z & w & 0 & 0 & 0 & 0 & x^{2a-1} & y^{2a-1} & x^{2a-1} \\
    w & 0 & z & x & 0 & 0 & z^{2a-1} & y^{2a-1} & 0 & y^{2a-1}+z^{2a-1} \\
    0 & z & w & x & y^{a-b}-z^{a-b} & w^{a+b} & z^{2a-1} & x^{2a-1} & y^{2a-1} & z^{2a-1}
\end{pmatrix}$$
$$A= \begin{pmatrix}
   0 & 0 & x^{2a-1} & y^{2a-1} \\
   0 & x^{2a-1} & y^{2a-1} & 0 \\
 0 & y^{2a-1} & 0 & x^{2a-1} \\
    z^{2a-1} & 0 & -x^{2a-2}w & -x^{2a-2}z \\
    w^{a+b} & w^{a+b} & 0 & 0 \\
    -y^{a-b}+z^{a-b} & -y^{a-b}+z^{a-b} & 0 & 0 \\
    -x & 0 & w & z \\
    0 & -z & w & z-w \\
    0 & -w & -z & 0 \\
    0 & 0 & -w & -z
\end{pmatrix}$$ 

We have thus established the validity of the Conjecture ~\ref{conjecture} for $k<5$ and all $a>k$. The cases with $a \leq k$ still require analysis, since for some of them we have $\dim \mathcal{V}(a,b,k) > 8c_2-3$. The family of bundles arising from $\mathcal{V}(a,b,k)$ exhibits gaps in the possible values of $c_2$; for instance, there are no stable bundles on $\mathcal{V}(a,b,k)$ with $c_2=12$, although semi-stable bundles do occur.


\bibliographystyle{amsalpha}
\bibliography{biblio}
\end{document}